\renewcommand{\eqref}[1]{Eq.~(\ref{#1})}  
\newcommand{\vect}[1]{\boldsymbol{#1}}
\DeclareMathOperator{\intr}{\mathrm{Int}}
\DeclareMathOperator{\diag}{\mathrm{diag}}
\newtheorem{theorem}{Theorem}
\newtheorem{lemma}{Lemma}
\newtheorem{proposition}{Proposition}
\newtheorem{assumption}{Assumption}
\newdefinition{remark}{Remark}
\definecolor{mygreen}{RGB}{72,160,66}
\definecolor{myblue}{RGB}{20,155,204}
\definecolor{myyellow}{RGB}{190,170,0}
\definecolor{myviolet}{RGB}{195,8,255}
\definecolor{myred}{RGB}{204,0,0}
\definecolor{myblue}{RGB}{0,66,255}
\begin{document}

\begin{frontmatter}

\title{Competitive epidemic networks with multiple survival-of-the-fittest outcomes\tnoteref{t1,t2}} 

\tnotetext[t1]{M. Ye is supported by the Western Australian Government through the Premier's Science Fellowship Program and the Defence Science Centre. The work of A.J., S.G, and K.H.J. is supported in part by the Knut and Alice Wallenberg Foundation, Swedish Research Council under Grant 2016-00861, and a Distinguished Professor Grant.}


\author[CUR]{Mengbin Ye\corref{cor1}}  
\author[ANU]{Brian D.O. Anderson}
\author[KTH]{Axel Janson}
\author[RIC]{Sebin Gracy}
\author[KTH]{Karl H. Johansson}

\cortext[cor1]{Corresponding author. Email: \texttt{mengbin.ye@curtin.edu.au}}

\address[CUR]{{Centre for Optimisation and Decision Science, Curtin University, Perth, Australia}}  
\address[ANU]{{School of Engineering, Australian National University, Canberra, Australia}}
\address[KTH]{{Division of Decision and Control Systems, School of Electrical Engineering  and  Computer  Science,  KTH Royal Institute of Technology, and Digital Futures, Stockholm, Sweden}}  
\address[RIC]{{Department of Electrical and Computer Engineering, Rice University, TX, USA}}

\begin{keyword}                           
susceptible-infected-susceptible (SIS); bivirus; stability of nonlinear systems; monotone systems
\end{keyword}    

\begin{abstract}
We use a deterministic model to study two competing viruses spreading over a two-layer network in the Susceptible--Infected--Susceptible (SIS) framework, and address a central problem of identifying the winning virus in a ``survival-of-the-fittest'' battle. Existing sufficient conditions ensure that the same virus always wins regardless of initial states. For networks with an arbitrary but finite number of nodes, there exists a necessary and sufficient condition that guarantees local exponential stability of the two equilibria corresponding to each virus winning the battle, meaning that either of the viruses can win, depending on the initial states. However, establishing existence and finding examples of networks with more than three nodes that satisfy such a condition has remained unaddressed. In this paper, we address both issues for networks with an arbitrary but finite number of nodes. We do this by proving that given almost any network layer of one virus, there exists a network layer for the other virus such that the resulting two-layer network satisfies the aforementioned condition. 
To operationalize our findings, a four-step procedure is developed to reliably and consistently design one of the network layers, when given the other layer. Conclusions from numerical case studies, including a real-world mobility network that captures the commuting patterns for people between $107$ provinces in Italy, extend on the theoretical result and its consequences.
\end{abstract}

\end{frontmatter}

\section{Introduction}\label{sec:intro}

Mathematical models of epidemics have been studied extensively for over two centuries, providing insight into the process by which infectious diseases and viruses spread across human or other biological populations~\citep{hethcote2000mathematics,anderson1991_virusbook}. Models utilizing health compartments are classical, where each individual in a large population may be susceptible to the virus (S), infected with the virus and able to infect others (I), or removed with permanent immunity through recovery or death (R). Different diseases or viruses are modeled by including different compartments and specifying the possible transitions between the compartments. Two classical frameworks are Susceptible--Infected--Removed (SIR) and Susceptible--Infected--Susceptible (SIS), while further compartments can be added to reflect latent or incubation periods for the disease, or otherwise provide a more realistic description of the epidemic process. Moving beyond single populations, network models of meta-populations have also been widely studied, where each node in the network represents a large population and links between nodes represent the potential for the virus to spread between populations



Recently, increasing attention has been directed to network models of epidemics involving two or more viruses~\citep{wang2019coevolution}. Depending on the problem scenario, the viruses may be cooperative; being infected with one virus makes an individual more vulnerable to infection from another virus~\citep{newman2013interacting,cai2015avalanche,gracy2022modeling}. Alternatively, viruses may be competitive, whereby being infected with one virus can provide an individual with partial or complete protection from also being infected with another virus. 
For competitive models utilizing the SIS framework, a central question is whether each virus will persist over time or become extinct~\citep{sahneh2014competitive,liu2019bivirus,castillo1989epidemiological,wei2013competing,watkins2016optimal,santos2015bi,carlos2,yang2017bi,van2014domination,santos2015bivirus_conference,pare2021multi,janson2020networked,wang2012dynamics,ye2022bivirus_survey}. If a particular virus persists while others become extinct, it is said to have won the ``survival-of-the-fittest'' battle, and is also referred to as achieving a state of ``competitive exclusion''~\citep{carlos2,ackleh2005competitive}. An important problem is to identify the winning virus for the given initial states. It is also crucial to understand when multiple viruses may persist in the meta-population, resulting in a state of ``coexistence''. 

Our work considers a popular model for competing epidemics, namely two viruses in the SIS framework, described as a deterministic continuous-time dynamical system~\citep{carlos2,sahneh2014competitive,santos2015bi,liu2019bivirus,ye2022_bivirus}. The two viruses, termed virus~$1$ and virus~$2$, spread across a two-layer meta-population network; each layer represents the possibly distinct topologies for virus~$1$ and virus~$2$. In each population, individuals belong to one of three mutually exclusive compartments: infected by virus~$1$, or infected by virus~$2$, or not infected by either of the viruses. The competing nature implies that an individual infected by virus~$1$ cannot be infected by virus~$2$, and vice versa. An infected individual that recovers from either virus will do so with no immunity, and then becomes susceptible again to infection from either virus. 

Existing literature on bivirus networks has identified a variety of scenarios 
that specify the winning virus in the survival-of-the-fittest battle, regardless of the initial state~\citep{sahneh2014competitive,liu2019bivirus,santos2015bi,santos2015bivirus_conference,pare2021multi,janson2020networked}. In this paper, however, we address a key yet relatively unexplored question: \textit{are there networks such that either virus can prevail in the survival-of-the-fittest battle?} For a 3-node graph, with tree structure, \citep{carlos2} presented a necessary and sufficient condition for either of the viruses to prevail, depending on the initial states. However, the question has remained unanswered for networks with four or more nodes and general topology structure; the complexity arising from the coupled spreading dynamics of multiple nodes and two viruses makes it nontrivial to extend the approach in~\citep{carlos2}. 

The main contribution of this paper is to show that for any given finite number of nodes, and supposing that the network layer corresponding to one of the viruses has an arbitrary structure, one can systematically construct the network layer corresponding to the other virus such that either virus can survive in the resulting bivirus network, depending on the initial states. 
The main results are based on novel control-theoretic arguments, and begin by recalling that under a certain necessary and sufficient condition on the infection and recovery rates of the epidemic dynamics, the two equilibria associated with either virus winning the survival battle are both locally exponentially stable~\citep{ye2022_bivirus} (this condition extends the condition presented in~\citep{carlos2}). This ensures that there are initial states for which \textit{either} of the viruses can win the battle. While it is straightforward to check whether a given bivirus network satisfies the condition, the converse problems of \textit{existence} and \textit{design} of such networks are significantly more challenging. 
The reason why the demonstration of the existence of bivirus networks with more than three nodes satisfying the aforementioned condition has remained an elusive challenge is that the condition is expressed implicitly using complex nonlinear functions of the infection and recovery rates. Further, there have been no simple procedures to design or create the two network layers to satisfy the condition (a numerical example of which would resolve the existence question). 

We prove that, given almost any network layer of one virus, there always exists a network layer of the second virus such that the resulting bivirus network 
satisfies the aforementioned necessary and sufficient condition. 
We subsequently operationalize the theoretical results by developing a robust four-step procedure, starting with an essentially arbitrary network layer, to construct the other network layer to satisfy the condition. This allows one to generate bivirus networks that have two possible survival-of-the-fittest outcomes. Numerical examples involving small synthetic networks and a real-world large scale network are presented to demonstrate the procedure, and they show that the bivirus network model can exhibit a rich and complex set of dynamical phenomena, verifying the theoretical findings in \citep{carlos2}, including the presence, on occasions, of an unstable equilibrium where, in each population, both viruses coexist. Taken as a whole, our work offers insight into bivirus networks and the complex survival-of-the-fittest battles that unfold over them. \vspace{-6pt}
\subsection*{Paper Outline}
The paper unfolds as follows: We conclude this section by collecting most of the notations and certain preliminaries required in the sequel. The bivirus network model is detailed in Section~\ref{sec:model}, where we also formulate the problem of interest. The main results are presented in Section~\ref{sec:results}; the proofs have been relegated to the Appendix. Two case studies that illustrate the construction procedure and the resulting diverse limiting behaviour are provided in Section~\ref{sec:simulations}. Finally, conclusions and potential future work are given in Section~\ref{sec:conclusions}.

\subsection*{Notation and Preliminaries}
We use $I$ to denote the identity matrix, with dimension to be understood from the context. Let $A$ be a square matrix, with eigenvalues $\lambda_i$. We use $\rho(A) = \max_{i} \vert \lambda_i \vert$ and $\sigma(A) =  \max_i \mathfrak{Re}(\lambda_i)$ to denote the spectral radius and the spectral abscissa of $A$, respectively. If $\sigma(A) < 0$, we say $A$ is Hurwitz. The matrix $A$ is reducible if and only if there is a permutation matrix $P$ such that $P^{\top}AP$ is block upper triangular; otherwise $A$ is said to be irreducible.
For two vectors $x = \{x_i\}$ and $y = \{y_i\}$ of the same dimension, we write $x \leq y \Leftrightarrow x_i \leq y_i$ for all~$i$, and $x < y \Leftrightarrow x_i < y_i$ for all~$i$. We use $\vect 0_n$ and $\vect 1_n$ to denote the all-$0$ and all-$1$ column vectors of dimension $n$. We define the set \[\Delta = \{(x, y) \in \mathbb R^n_{\geq 0} \times \mathbb R^n_{\geq 0} : \vect 0_n \leq x + y \leq \vect 1_n \},\]
and its interior by $\intr(\Delta)$.

In this paper, we consider two-layer directed networks represented by the graph $\mathcal{G} = (\mathcal{V}, \mathcal{E}_A, \mathcal{E}_B)$, where $\mathcal{V} = \{1, \hdots, n\}$ is the set of nodes, and $\mathcal{E}_A \subseteq\mathcal{V} \times \mathcal{V}$ and $\mathcal{E}_B\subseteq\mathcal{V} \times \mathcal{V}$ are the ordered set of edges of the first and second layer, respectively. Associated with  $\mathcal{E}_A$ and $\mathcal{E}_B$ are the nonnegative adjacency matrices $A =\{a_{ij}\}$ and $B = \{b_{ij}\}$, respectively, that capture the edge weights. We define $a_{ij} > 0$ and $b_{ij} > 0$ if and only if $(j,i) \in \mathcal E_A$ and $(j,i) \in \mathcal E_B$, respectively, where $(j,i)$ is the directed edge from node $j$ to node $i$. A layer is strongly connected if and only if there is a path from any node $i$ to any other node $j$, which corresponds to the associated adjacency matrix being irreducible~\citep{berman1979nonnegative_matrices}. 

\section{Bivirus Network Model}\label{sec:model}

Following the convention in the literature~\citep{sahneh2014competitive}, we consider two viruses spreading over a two-layer network represented by the graph $\mathcal{G} = (\mathcal{V}, \mathcal{E}_A, \mathcal{E}_B)$, where $\mathcal{E}_A$ and $\mathcal{E}_B$ determine the spreading topology for virus~$1$ and virus~$2$, respectively. Each node represents a well-mixed population of individuals with a large and constant size; a well-mixed population means any two individuals in the population can interact with the same positive probability. Fig.~\ref{fig:epidemic_schematic} shows a schematic of the compartment transitions, and the two-layer network structure.

We define $x_i(t) \in [0, 1]$ and $y_i(t) \in [0, 1]$, $t\in \mathbb{R}_+$, as the fraction of individuals in population $i \in \mathcal{V}$ infected with virus~$1$ and virus~$2$, respectively.
In accordance with \citep{sahneh2014competitive,liu2019bivirus,santos2015bi}, the dynamics at node $i \in \mathcal{V}$ are given by
\begin{subequations}\label{eq:bivirus_node}
    \begin{align}
    \dot x_i(t) & = - x_i(t) + (1-x_i(t)-y_i(t))\sum_{j=1}^n a_{ij} x_j(t) \\
    \dot y_i(t) & = - y_i(t) + (1-x_i(t)-y_i(t))\sum_{j=1}^n b_{ij} y_j(t),
    \end{align}
\end{subequations}
with $a_{ij} \geq 0$ and $b_{ij} \geq 0$ being infection rate parameters. In fact, individuals in population~$j$ infected with virus~$1$ (resp. virus~$2$) can infect susceptible individuals in population~$i$ if and only if $(j,i) \in \mathcal{E}_A$ (resp. $\mathcal{E}_B$) at a rate $a_{ij}$ (resp. $b_{ij}$).  By defining $x(t) = [x_1(t), \hdots, x_n(t)]^\top$ and $y(t) = [y_1(t), \hdots, y_n(t)]^\top$, we obtain the following bivirus dynamics for the meta-population network:
\begin{subequations}\label{eq:bivirus_dynamics}
    \begin{align}
    &\dot x(t) = - x(t) + (I-X(t)-Y(t))Ax(t) \label{eq:virus1_dynamics} \\
    &\dot y(t) = - y(t) + (I-X(t)-Y(t))By(t), \label{eq:virus2_dynamics}
    \end{align}
\end{subequations}
where $X = \diag(x_1, \hdots, x_n)$, and $Y = \diag(y_1, \hdots, y_n)$. The system in \eqref{eq:bivirus_dynamics} has state variable $(x(t), y(t))$, and is in fact a mean-field approximation of a coupled Markov process that captures the SIS bivirus contagion process~\citep{sahneh2014competitive,santos2014bi,liu2019bivirus}. Note that we have taken the recovery rates for both viruses to be equal to unity for every population for the purposes of clarity. Importantly, this can actually be done without loss of generality when examining the stability properties of equilibria for the bivirus system (see \cite[Lemma~3.7]{ye2022_bivirus}). It is known from \cite[Lemma~8]{liu2019bivirus} that $\Delta$ is a positive invariant set for the bivirus dynamics in \eqref{eq:bivirus_dynamics}. 
Given that $x_i$ and $y_i$ represent the fraction of population~$i$ infected with virus~$1$ and virus~$2$, respectively, we naturally consider \eqref{eq:bivirus_dynamics} exclusively in $\Delta$, so that $x_i(t)$ and $y_i(t)$ retain their physical meaning in the context of the model for all $t\geq 0$.

We place the following standing assumption on the network topology.

\begin{assumption}\label{assm:strong_connect}
    The matrices $A$ and $B$ are irreducible, which is equivalent to both layers of $\mathcal{G}$ being, separately, strongly connected.
\end{assumption}

In the epidemiological context, this implies that there exists an infection pathway for the virus from any node to any other node. Strong connectivity is a standard assumption for \eqref{eq:bivirus_dynamics} (see~\citep{ye2022_bivirus,liu2019bivirus,pare2021multi}) and sometimes assumed without explicit statement~\citep{sahneh2014competitive} or is inherent from the problem formulation~\citep{carlos2}.


\begin{figure}
\centering
\subfloat[]{\def\svgwidth{0.55\linewidth}
\begingroup%
  \makeatletter%
  \providecommand\color[2][]{%
    \errmessage{(Inkscape) Color is used for the text in Inkscape, but the package 'color.sty' is not loaded}%
    \renewcommand\color[2][]{}%
  }%
  \providecommand\transparent[1]{%
    \errmessage{(Inkscape) Transparency is used (non-zero) for the text in Inkscape, but the package 'transparent.sty' is not loaded}%
    \renewcommand\transparent[1]{}%
  }%
  \providecommand\rotatebox[2]{#2}%
  \newcommand*\fsize{\dimexpr\f@size pt\relax}%
  \newcommand*\lineheight[1]{\fontsize{\fsize}{#1\fsize}\selectfont}%
  \ifx\svgwidth\undefined%
    \setlength{\unitlength}{278.92168903bp}%
    \ifx\svgscale\undefined%
      \relax%
    \else%
      \setlength{\unitlength}{\unitlength * \real{\svgscale}}%
    \fi%
  \else%
    \setlength{\unitlength}{\svgwidth}%
  \fi%
  \global\let\svgwidth\undefined%
  \global\let\svgscale\undefined%
  \makeatother%
  \begin{picture}(1,0.15038598)%
    \lineheight{1}%
    \setlength\tabcolsep{0pt}%
    \put(0.39369077,0.05399987){\color[rgb]{0,0,0}\makebox(0,0)[lt]{\lineheight{1.25}\smash{\begin{tabular}[t]{l}$S$\end{tabular}}}}%
    \put(0.05198875,0.05213325){\color[rgb]{0,0,0}\makebox(0,0)[lt]{\lineheight{1.25}\smash{\begin{tabular}[t]{l}$I$\end{tabular}}}}%
    \put(0.75152636,0.05218792){\color[rgb]{0,0,0}\makebox(0,0)[lt]{\lineheight{1.25}\smash{\begin{tabular}[t]{l}$I$\end{tabular}}}}%
    \put(1.48729651,-0.28792794){\color[rgb]{0,0,0}\makebox(0,0)[lt]{\begin{minipage}{0.96547277\unitlength}\raggedright \end{minipage}}}%
    \put(0,0){\includegraphics[width=\unitlength,page=1]{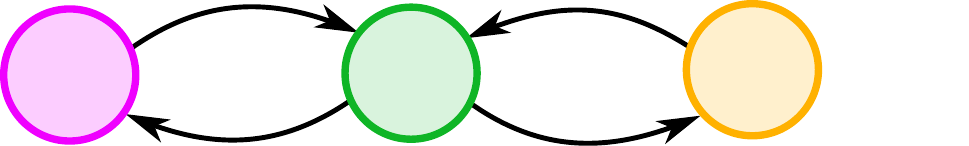}}%
  \end{picture}%
\endgroup%
\label{fig:transitions_bivirus}}
	\hfill
\subfloat[]{\def\svgwidth{0.6\linewidth}
\begingroup%
  \makeatletter%
  \providecommand\color[2][]{%
    \errmessage{(Inkscape) Color is used for the text in Inkscape, but the package 'color.sty' is not loaded}%
    \renewcommand\color[2][]{}%
  }%
  \providecommand\transparent[1]{%
    \errmessage{(Inkscape) Transparency is used (non-zero) for the text in Inkscape, but the package 'transparent.sty' is not loaded}%
    \renewcommand\transparent[1]{}%
  }%
  \providecommand\rotatebox[2]{#2}%
  \newcommand*\fsize{\dimexpr\f@size pt\relax}%
  \newcommand*\lineheight[1]{\fontsize{\fsize}{#1\fsize}\selectfont}%
  \ifx\svgwidth\undefined%
    \setlength{\unitlength}{440.31430218bp}%
    \ifx\svgscale\undefined%
      \relax%
    \else%
      \setlength{\unitlength}{\unitlength * \real{\svgscale}}%
    \fi%
  \else%
    \setlength{\unitlength}{\svgwidth}%
  \fi%
  \global\let\svgwidth\undefined%
  \global\let\svgscale\undefined%
  \makeatother%
  \begin{picture}(1,0.54303513)%
    \lineheight{1}%
    \setlength\tabcolsep{0pt}%
    \put(0,0){\includegraphics[width=\unitlength,page=1]{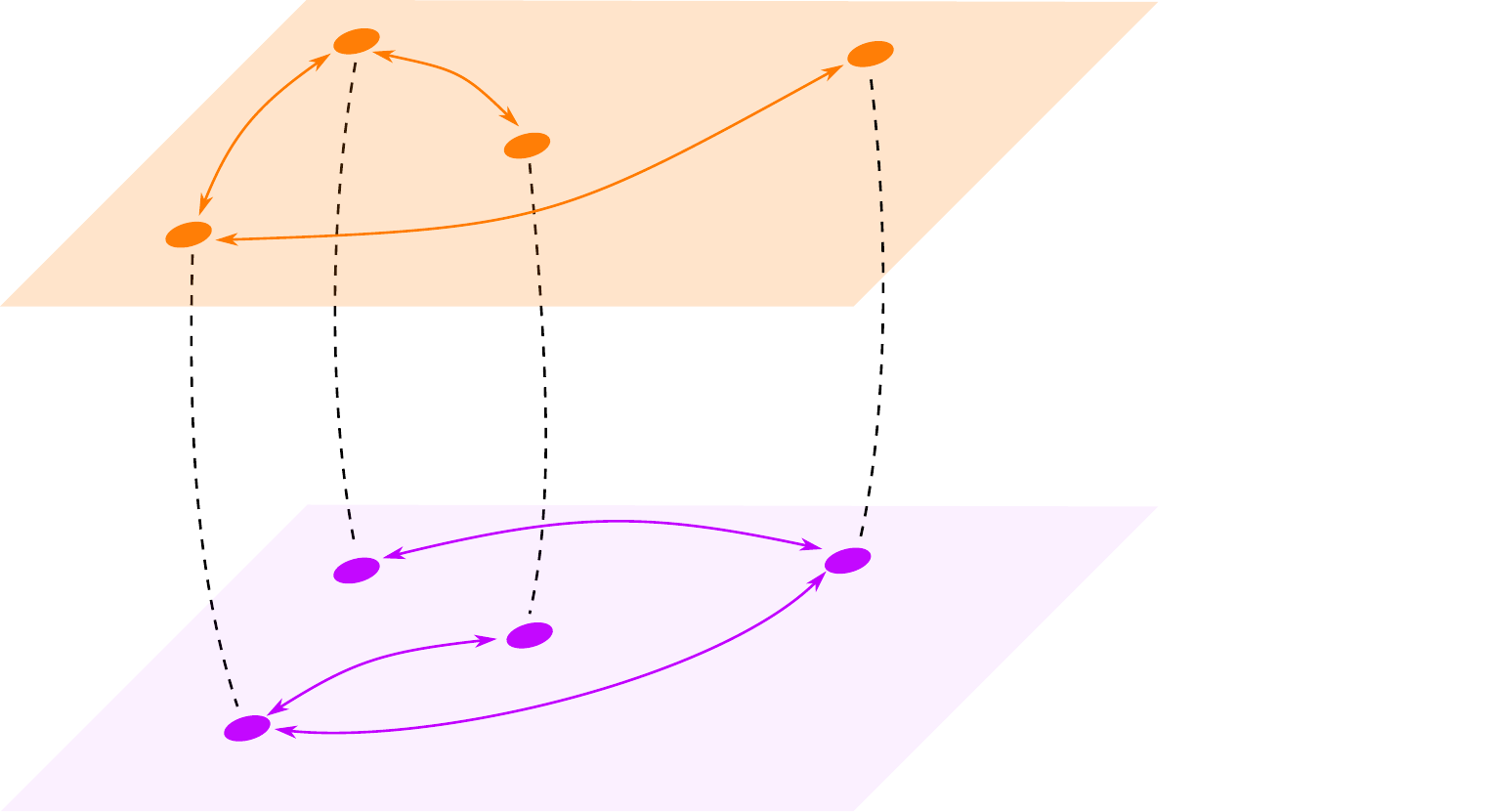}}%
    \put(0.66728338,0.40495862){\color[rgb]{0,0,0}\makebox(0,0)[lt]{\lineheight{1.25}\smash{\begin{tabular}[t]{l} \small Virus 1 layer\end{tabular}}}}%
    \put(0.6461602,0.05307349){\color[rgb]{0,0,0}\makebox(0,0)[lt]{\lineheight{1.25}\smash{\begin{tabular}[t]{l}\small Virus 2 layer\end{tabular}}}}%
  \end{picture}%
\endgroup%
\label{fig:two_layer}}
    \caption{Schematic of the compartment transitions and two-layer infection network. (a) Each individual exists in one of three health states: Susceptible ($S$), Infected with virus 1 ($I$, orange), or Infected with virus $2$, ($I$, purple). Arrows represent possible transitions between compartments. (b) The two-layer network through which the viruses can spread between populations (nodes). Note that the edge sets of the two layers do not need to match, so that virus $1$ can spread between two nodes but virus $2$ cannot, and vice versa. }
	\label{fig:epidemic_schematic}
\end{figure}

The second standing assumption of the paper is now stated, and as we will explain below, is in place due to our interest in survival-of-the-fittest outcomes.

\begin{assumption}\label{assm:rho}
    There holds $\rho(A) > 1$ and $\rho(B) > 1$.
\end{assumption}

There is always the healthy equilibrium $(x = \vect 0_n, y = \vect 0_n)$, where both viruses are extinct, and under Assumption~\ref{assm:rho}, it is an unstable equilibrium (in fact, a repeller such that all trajectories starting in its neighborhood move away from it)~\citep{liu2019bivirus}. With irreducible $A$ and $B$, there can be at most three \textit{types} of equilibria. There can be two ``survival-of-the-fittest'' equilibria $(\bar x, \vect 0_n)$ and $(\vect 0_n, \bar y)$, where $\vect 0_n < \bar x < \vect 1_n$ and $\vect 0_n < \bar y < \vect 1_n$~\citep{liu2019bivirus} (also referred to as boundary equilibria elsewhere in the literature~\cite{ye2022_bivirus}. Assumption~\ref{assm:rho} is relevant here: the necessary and sufficient conditions for $(\bar x, \vect 0_n)$ and $(\vect 0_n, \bar y)$ to exist are $\rho(A) > 1$ and $\rho(B) > 1$, respectively, see~\cite[Theorem~2 and Theorem~3]{liu2019bivirus}.
Moreover, $\bar x$ and $\bar y$ correspond to the unique endemic equilibrium of the classical SIS model considering only virus~$1$ and only virus~$2$, respectively~\citep{fall2007epidemiological,Lajmanovich1976,janson2020networked,liu2019bivirus}. These two separate single virus systems are given by 
\begin{subequations}\label{eq:single_SIS}
\begin{align}
     \dot x(t)& =-x(t)+(I-X(t))Ax(t),\label{eq:v1} \\
     \dot y(t)& =-y(t)+(I-Y(t))By(t).\label{eq:v2}
\end{align}
\end{subequations}
In the sequel, we provide a brief summary of the single virus system dynamics, including results useful for our theoretical analysis.
The third type of equilibrium involves \textit{coexistence} of both viruses, and any such equilibrium $(\tilde x, \tilde y)$ must necessarily satisfy $\tilde x > \vect 0_n$, $\tilde y > \vect 0_n$ and $\tilde x + \tilde y < \vect 1_n$~\citep{ye2022_bivirus}. Assumption~\ref{assm:rho} is the necessary condition for existence of a coexistence equilibrium, but it is not sufficient~\citep{ye2022_bivirus}. Bivirus systems can have a unique coexistence equilibrium which can be stable or unstable (uniqueness has only been established for $n\leq 3$)~\citep{carlos2,ye2022_bivirus}, or multiple (including an infinite number)~\citep{ye2022_bivirus,liu2019bivirus,anderson2022_bivirus_PH,doshi2022convergence}, or none~\citep{santos2015bi,ye2022_bivirus,carlos2}.

Next, we remark that recent results show that for a `generic' bivirus network\footnote{See \citep{ye2022_bivirus,ye2022bivirus_survey} for details on technical definitions of `generic' and `almost all').}, convergence to a stable equilibrium (of which there can be multiple) occurs for `almost all' initial conditions~\citep{ye2022_bivirus,ye2022bivirus_survey}. Hence, the key question is as follows: to which equilibrium does convergence occur? We are now ready to formulate the problem to be studied, which deals with bivirus networks with at least two \textit{stable} attractive equilibria.


{\bf Problem formulation.} In this paper, we study scenarios where either of the viruses can win the survival-of-the-fittest battle, under Assumptions~\ref{assm:strong_connect} and \ref{assm:rho}. Such scenarios are uncovered by examining the stability properties of the two equilibria $(\bar x, \vect 0_n)$ and $(\vect 0_n, \bar y)$ for the network dynamics in \eqref{eq:bivirus_dynamics}. Namely, we seek to study bivirus networks with conditions on $A$ and $B$ that ensure both $(\bar x, \vect 0_n)$ and $(\vect 0_n, \bar y)$ are locally exponentially stable. In such a scenario, there exist open sets $U \in \intr(\Delta)$ and $W \in \intr(\Delta)$, with non-zero Lebesgue measure and $U\cap W = \emptyset$, such that $\lim_{t\to\infty} (x(t), y(t)) = (\bar x, \vect 0_n)$ for all $(x(0), y(0)) \in U$ and  $\lim_{t\to\infty} (x(t), y(t)) = (\vect 0_n, \bar y)$ for all $(x(0), y(0)) \in W$. In context, the winner of a survival-of-the-fittest battle depends on the initial conditions.


The local exponential stability and instability of $(\bar x, \vect 0_n)$ and $(\vect 0_n, \bar y)$ can be characterized by analysis of the Jacobian of the right hand side of \eqref{eq:bivirus_dynamics}, evaluated at the two equilibria. Let $\bar X = \diag(\bar x_1, \hdots, \bar x_n)$ and $\bar Y = \diag(\bar y_1, \hdots, \bar y_n)$. We recall the following result of \cite[Theorem~3.10]{ye2022_bivirus}. 
\begin{proposition}\label{prop:boundary_stablility_cond}
Consider \eqref{eq:bivirus_dynamics} under Assumption~\ref{assm:strong_connect}. Then the following hold:
    \begin{enumerate}
    \item \label{ineq:1} The equilibrium $(\bar x, \vect 0_n)$ is locally exponentially stable if and only if $\rho((I-\bar X)B)<1$.
    \item \label{ineq:2}  The equilibrium $({\bf{0}}_n,\bar y)$ is locally exponentially stable if and only if $\rho((I-\bar Y)A)<1$.
\end{enumerate}
\end{proposition}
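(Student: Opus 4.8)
The plan is to invoke Lyapunov's indirect method, which characterizes local exponential stability of a hyperbolic equilibrium of the smooth system \eqref{eq:bivirus_dynamics} precisely by Hurwitzness of the Jacobian of the vector field at that point. First I would compute the Jacobian $J$ of the right-hand side of \eqref{eq:bivirus_dynamics} with respect to the stacked state $(x,y)$. A direct differentiation (using $X = \diag(x)$, $Y=\diag(y)$) gives the $2n\times 2n$ block matrix
\[
J(x,y) = \begin{bmatrix} -I - \diag(Ax) + (I - X - Y)A & -\diag(Ax) \\[2pt] -\diag(By) & -I - \diag(By) + (I-X-Y)B \end{bmatrix}.
\]
The key observation is that evaluating at the boundary equilibrium $(\bar x, \vect 0_n)$ forces $Y=0$ and $By = \vect 0_n$, so the lower-left block $-\diag(By)$ vanishes and $J$ becomes block upper-triangular. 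Consequently the spectrum of $J$ is the union of the spectra of its two diagonal blocks, and $J$ is Hurwitz if and only if both diagonal blocks are Hurwitz.

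Next I analyze the two blocks. The bottom-right block evaluates to $J_{22} = -I + (I-\bar X)B$. Since $\vect 0_n < \bar x < \vect 1_n$, the matrix $(I-\bar X)B$ is nonnegative, so by Perron--Frobenius its spectral radius is itself an eigenvalue attaining the largest real part; hence $\sigma(J_{22}) = -1 + \rho((I-\bar X)B)$, giving $J_{22}$ Hurwitz if and only if $\rho((I-\bar X)B) < 1$, which is exactly the claimed condition. It therefore remains to show that the top-left block can never be the cause of instability, i.e. that it is \emph{always} Hurwitz. That block evaluates to $J_{11} = -I - \diag(A\bar x) + (I-\bar X)A$, which is precisely the Jacobian of the single-virus system \eqref{eq:v1} at its endemic equilibrium $\bar x$.

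The crux, and what I expect to be the main obstacle, is establishing that $J_{11}$ is Hurwitz; I would do this self-contained via a Metzler argument rather than quoting single-virus stability. Writing $M = (I-\bar X)A$, irreducibility of $A$ and positivity of the diagonal $I-\bar X$ make $M$ nonnegative and irreducible, while the equilibrium identity $(I-\bar X)A\bar x = \bar x$ reads $M\bar x = \bar x$ with $\bar x > \vect 0_n$, so $\rho(M)=1$ by Perron--Frobenius. Now $J_{11} = M - I - \diag(A\bar x)$ is an irreducible Metzler matrix, and
\[
J_{11}\bar x = (M\bar x - \bar x) - \diag(A\bar x)\,\bar x = -\diag(A\bar x)\,\bar x < \vect 0_n,
\]
the strict inequality holding because $A$ (being irreducible) has no zero row and $\bar x > \vect 0_n$ force $A\bar x > \vect 0_n$. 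For an irreducible Metzler matrix, the existence of a strictly positive vector mapped strictly below $\vect 0_n$ implies the spectral abscissa is negative, so $J_{11}$ is Hurwitz. Combining the three steps yields part~\ref{ineq:1}. Part~\ref{ineq:2} then follows by the symmetric argument: at $(\vect 0_n, \bar y)$ the upper-right block $-\diag(Ax)$ vanishes, making $J$ block lower-triangular; the single-virus-2 block is Hurwitz by the identical Metzler reasoning, and the remaining block $-I+(I-\bar Y)A$ is Hurwitz exactly when $\rho((I-\bar Y)A)<1$.
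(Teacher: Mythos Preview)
Your argument is correct. The paper does not actually prove Proposition~\ref{prop:boundary_stablility_cond}; it is quoted verbatim as \cite[Theorem~3.10]{ye2022_bivirus}, so there is no in-paper proof to compare against. Your linearization approach---block-triangularizing the Jacobian at $(\bar x,\vect 0_n)$, reducing Hurwitzness to the two diagonal blocks, and handling $J_{11}$ by the Metzler test $J_{11}\bar x<\vect 0_n$---is the standard route and is exactly how the cited result is obtained. One small refinement: your opening sentence restricts Lyapunov's indirect method to \emph{hyperbolic} equilibria, but the ``only if'' direction must also cover the borderline case $\rho((I-\bar X)B)=1$; there the Jacobian has a zero eigenvalue, and you should state explicitly that local exponential stability of an equilibrium forces the Jacobian to be Hurwitz (hence $\rho=1$ is already excluded), rather than appealing to hyperbolicity.
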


The equilibria $(\bar x, \vect 0_n)$ and $(\vect 0_n,\bar y)$ are unstable if the corresponding inequality in the proposition are reversed. Notice that these inequalities involve $\bar X$ and $\bar Y$ which are a nonlinear function of $A$ and $B$, respectively. In other words, $\rho((I-\bar X)B)$ depends on $B$ explicitly and $A$ implicitly, and hence the stability property of $(\bar x, \vect 0_n)$ is tied to the complex interplay between the $A$ and $B$ matrices, or equivalently, between the edge set and edge weights of the two layers. The same is true for $(\vect 0_n, \bar y)$. Thus, if one were provided $A$ and $B$, it is straightforward to check if the conditions hold, as there are iterative algorithms to compute $\bar x$ and $\bar y$, e.g.~\cite[Theorem~4.3]{mei2017epidemics_review} or \cite[Theorem~5]{vanMeighem2009_virus}. However, the inverse problems of existence and design are significantly more difficult to address. First, for an arbitrary number of nodes, proving the \textit{existence} of a bivirus system satisfying the above inequalities has remained an elusive challenge; firstly, is not automatically guaranteed that there exist $A$ and $B$ which satisfy one let alone both of conditions for local exponential stability given above. Second, no methods have been developed for designing bivirus networks with multiple survival-of-the-fittest outcomes. In the rest of this paper, we comprehensively address both of these issues.

The existing literature has identified sufficient conditions on $A$ and $B$ such that one survival-of-the-fittest equilibrium is locally stable and the other unstable, see ~\citep{santos2015bi,santos2015bivirus_conference,janson2020networked,sahneh2014competitive,liu2019bivirus,carlos2}. Note that in such a scenario, one can still have (multiple) locally stable coexistence equilibria~\citep{anderson2022_bivirus_PH}. Stronger sufficient conditions on $A$ and $B$ have been identified that ensure one of $(\bar x, \vect 0_n)$ and $(\vect 0_n, \bar y)$ is in fact globally stable in $\intr(\Delta)$ and the other unstable~\citep{santos2015bi,ye2022_bivirus}. Sufficient conditions on $A$ and $B$ can also be identified for both survival equilibria to be unstable~\citep{sahneh2014competitive,janson2020networked,carlos2}, and hence convergence must occur to a coexistence equilibrium (which may or may not be unique~\citep{anderson2022_bivirus_PH}).  For many of the conditions discussed, it is straightforward to i) prove there exist $A$ and $B$ that satisfy each of them, and ii) design one network layer, given the other layer, to meet these conditions. In contrast, existence and design of network topology to ensure both survival-of-the-fittest equilibria are locally stable is nontrivial, as we explained above, and has not been addressed in the literature to the best of our knowledge. 



\begin{remark}
    Our paper considers \eqref{eq:bivirus_dynamics} in the context of a meta-population model.
In some literature~\citep{sahneh2014competitive,liu2019bivirus}, node~$i$ is taken to be a single individual, and $x_i$ and $y_i$ are the probabilities that individual~$i$ is infected with virus~$1$ and virus~$2$, respectively. In other literature~\citep{carlos2}, the nodes may represent groups of individuals split according to some demographic characteristics, e.g. male or female. 
Depending on the modelling context, the diagonal entries of $A$ and $B$ may be zero (e.g. an individual cannot infect themselves), or $A$ and $B$ may be constrained to have the same zero and nonzero entry pattern (the two layers have the same topologies, but possibly different edge weights). 
Irrespective of the context, the dynamics are as given in \eqref{eq:bivirus_dynamics}, and the results in this paper are \textit{equally applicable} to various alternative physical/epidemiological interpretations of the model. This is because all of the aforementioned modelling frameworks are equivalent, see \citep{pare2018analysis}. 
\end{remark}

\section{Main Results}\label{sec:results}

The main theoretical results of this work are presented in two parts. 
First, we present an existence result which states that given almost any $A$ matrix, a corresponding $B$ matrix can be found to satisfy the desired stability condition. Second, we detail the four-step procedure for finding such a $B$ matrix, given a $A$ matrix. 

\subsection{Preliminaries on matrix theory and single virus SIS systems}

We recall relevant results from matrix theory and properties of the single SIS network model, required for deriving the main theoretical results. We say that a square matrix $A$ is a nonnegative (positive) matrix if all of its entries are nonnegative (positive). A nonnegative matrix $A$ is irreducible if and only if whenever $y=Ax$, with  $x\geq {\bf{0}}_n$, $y$ always has a nonzero entry in at least one position where $x$ has a zero entry. If $A$ is nonnegative and irreducible, then by the Perron--Frobenius Theorem~\citep{horn1994topics_matrix}, $\sigma(A) = \rho(A)$ is a simple eigenvalue, and we call it the Perron--Frobenius eigenvalue of $A$. The associated eigenvector can be chosen to have all positive entries, and up to a scaling, there is no other eigenvector with this property.  We say that $A$ is a Metzler matrix if all of its off-diagonal entries are nonnegative. By applying the Perron--Frobenius Theorem~\citep{horn1994topics_matrix} to a Metzler and irreducible $A$, similar conclusions on $\sigma(A)$ and the corresponding eigenvector can be drawn. A square matrix $A$ is an $M$-matrix if $-A$ is Metzler and all eigenvalues of $A$ have positive real parts except for any at the origin. If $A$ has eigenvalues with strictly positive real parts, we call it a nonsingular $M$-matrix, and a singular $M$-matrix otherwise~\citep{horn1994topics_matrix}.

Some properties of $M$-matrices and Metzler matrices, relevant to our theoretical results, are detailed as follows:
\begin{enumerate}
    \item For a (singular) $M$-matrix $F$, and any positive diagonal $D$, $DF$ is also a (singular) $M$-matrix.
    \item Let $F$ be an irreducible singular $M$-matrix. Then, for any nonnegative nonzero diagonal $D$, $F+D$ is an irreducible nonsingular $M$-matrix.
    \item Let $B$ be nonnegative irreducible and $D$ positive diagonal. Then for the Metzler matrix $-D+B$, there holds i) $\sigma(-D+B) > 0 \Leftrightarrow \rho(D^{-1}B) > 1$, ii) $\sigma(-D+B) = 0 \Leftrightarrow \rho(D^{-1}B) = 1$ and iii) $\sigma(-D+B) < 0 \Leftrightarrow \rho(D^{-1}B) < 1$.
    \item
For an irreducible nonnegative matrix $B$ and a  positive diagonal matrix $D$ with $d_{ii}<1$ $\forall i$, there holds $\rho(B)>\rho((I-D)B)$ and $\rho(B)>\rho(DB)$. 
\item
For a nonsingular irreducible $M$-matrix $F$, $F^{-1}$ is a positive matrix. 
\end{enumerate}
The first two results are easily proved from the property that all the principal minors of an $M$-matrix are positive in the nonsingular case and nonnegative in the singular case, see \cite[Theorem~4.31]{qu2009cooperative_book} and \cite[Chapter 6, Theorem 2.3 and Theorem 4.6]{berman1979nonnegative_matrices}. The third result 
is due to
\cite[Proposition~1]{liu2019bivirus}. The fourth is a consequence of \cite[Chapter 2, Corollary 1.5(b)]{berman1979nonnegative_matrices} and the irreducibility of both $(I-D)B$ and $DB$, which sum to $B$. The fifth is a consequence of \cite[Chapter 6, Theorem 2.7]{berman1979nonnegative_matrices}.

We now recall results for the single virus system in \eqref{eq:v1}, but obviously the same results will hold for \eqref{eq:v2}. The limiting behavior of \eqref{eq:v1} can be fully characterized by $\rho(A)$, see e.g.~\citep{Lajmanovich1976,mei2017epidemics_review,ye2021_PH_TAC}. Specifically, if $\rho(A) \leq 1$, then $\lim_{t\to\infty} x(t) = \vect 0_n$ for all $x(0) \in [0, 1]^n$. We call $\vect 0_n$ the healthy equilibrium. If $\rho(A) > 1$, then $\lim_{t\to\infty} x(t) = \bar x$ for all $x(0) \in [0, 1]^n \setminus \{\vect 0_n\}$, where $\vect 0_n < \bar x < \vect 1_n$ is the unique non-zero (endemic) equilibrium which is exponentially stable. Note that the equilibrium equation yields:
\begin{equation}\label{eq:equi_A}
    [-I + (I - \bar X) A]\bar x = \mathbf{0}_n,
\end{equation}
with $\bar X = \diag(\bar x)$. The following result characterises properties of the matrix on the left of \eqref{eq:equi_A}.

\begin{lemma}\label{lem:useful}
Consider the single virus system in \eqref{eq:v1}, and suppose that $\rho(A) > 1$ and $A$ is irreducible. With respect to \eqref{eq:equi_A}, the following hold:
\begin{enumerate}
    \item The matrix $-I + (I-\bar X)A$ is a singular irreducible Metzler matrix;
    \item $\sigma(-I + (I-\bar X)A) = 0$ is a simple eigenvalue with an associated unique (up to scaling) left eigenvector $u^\top$ and right eigenvector $\bar x$, with all entries positive, i.e. $u^\top \gg \vect 0_n$ and $\bar x \gg \vect 0_n$.
\end{enumerate}
\end{lemma}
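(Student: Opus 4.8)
The plan is to establish the three structural claims of part~1 directly from the equilibrium equation and the sign structure of $M := -I + (I-\bar X)A$, and then to obtain part~2 as an application of the Perron--Frobenius theorem for irreducible Metzler matrices.

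For part~1, I would first note that since $\mathbf{0}_n < \bar x < \mathbf{1}_n$, the diagonal matrix $I - \bar X$ is positive, with entries $1 - \bar x_i \in (0,1)$. The off-diagonal entries of $M$ coincide with those of $(I-\bar X)A$, namely $(1-\bar x_i)a_{ij} \geq 0$ for $i \neq j$; hence $M$ is Metzler. Because $1 - \bar x_i > 0$ for every $i$, the zero/nonzero pattern of the off-diagonal entries of $M$ is identical to that of $A$, so premultiplication by the positive diagonal $I - \bar X$ preserves irreducibility and $M$ inherits irreducibility from $A$ (adding the diagonal term $-I$ does not alter the off-diagonal pattern). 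Finally, \eqref{eq:equi_A} reads $M\bar x = \mathbf{0}_n$ with $\bar x \neq \mathbf{0}_n$, so $0$ is an eigenvalue of $M$ and $M$ is singular.

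For part~2, I would invoke the Perron--Frobenius theorem in its Metzler form: since $M$ is irreducible and Metzler, its spectral abscissa $\sigma(M)$ is a simple real eigenvalue admitting (up to scaling) unique positive right and left eigenvectors, and it is the only eigenvalue of $M$ whose eigenvector can be chosen entrywise positive. The key step is to identify $\sigma(M)$ with $0$. This follows because $\bar x \gg \mathbf{0}_n$ is, by \eqref{eq:equi_A}, a positive right eigenvector of $M$ associated with the eigenvalue $0$; by the uniqueness clause just stated, the eigenvalue admitting a positive eigenvector must be the Perron--Frobenius eigenvalue, forcing $\sigma(M) = 0$. Simplicity of the eigenvalue $0$ and the fact that $\bar x \gg \mathbf{0}_n$ is its right eigenvector are then immediate, and applying the same reasoning to $M^\top$ (also irreducible Metzler, with the same spectral abscissa) yields the unique positive left eigenvector $u^\top \gg \mathbf{0}_n$.

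I expect the only nontrivial point to be the identification $\sigma(M) = 0$, rather than merely $0$ being \emph{some} eigenvalue of $M$; the distinction matters because singularity alone does not preclude eigenvalues with positive real part, and it is precisely $\sigma(M)=0$ that makes this matrix the critical object for the stability analysis later. Pinning it down relies crucially on the uniqueness of the positive eigenvector in Perron--Frobenius. As an alternative that makes this explicit, property~3 of the matrix preliminaries, applied with $D = I$ and the nonnegative irreducible matrix $(I-\bar X)A$, gives $\sigma(M) = 0 \Leftrightarrow \rho((I-\bar X)A) = 1$; since $M\bar x = \mathbf{0}_n$ rewrites as $(I-\bar X)A\,\bar x = \bar x$ with $\bar x \gg \mathbf{0}_n$, Perron--Frobenius applied to the nonnegative irreducible matrix $(I-\bar X)A$ yields $\rho((I-\bar X)A) = 1$, closing the argument.
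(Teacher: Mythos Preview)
Your proposal is correct and follows essentially the same approach as the paper: establish that $(I-\bar X)$ is positive diagonal so that $(I-\bar X)A$ is nonnegative irreducible, conclude $-I+(I-\bar X)A$ is irreducible Metzler, use \eqref{eq:equi_A} to obtain singularity, and then invoke Perron--Frobenius for Metzler matrices. The paper's proof of part~2 is terser, simply deferring to ``the properties of Metzler matrices detailed above,'' whereas you spell out explicitly why $\sigma(M)=0$ rather than merely $0\in\mathrm{spec}(M)$; this extra care is welcome but does not constitute a different argument.
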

\begin{proof}
Regarding the first statement, observe that $\vect 0_n \ll \bar x \ll \vect 1_n$ guarantees that $(I-\bar X)$ is a positive diagonal matrix, and hence $(I-\bar X)A$ is irreducible precisely when $A$ is irreducible. It is also nonnegative. Thus, $-I + (I-\bar X)A$ is an irreducible Metzler matrix. 
\eqref{eq:equi_A} implies that $\bar x$ is a null vector of the matrix $-I + (I-\bar X)A$, which accordingly is a singular matrix. Regarding the second statement, the conclusions immediately follow by viewing \eqref{eq:equi_A} in light of the properties of Metzler matrices detailed above.
\end{proof}

\subsection{Existence of two stable survival equilibria}\label{ssec:main_thm}

We now present the main theoretical result of this paper, showing that given almost any $A$ matrix, one can find a $B$ matrix (with $\rho(B)>1$) such that the two spectral radius inequalities in Proposition~\ref{prop:boundary_stablility_cond} are satisfied. 

Given $A$ with $\rho(A) > 1$, let $u^\top$ and $\bar x$ be the eigenvectors stated in Lemma~\ref{lem:useful}, normalised to satisfy $u^\top\bar x = 1$. Let $B'$ be any other nonnegative and irreducible matrix such that 
\begin{equation}
    [I - (I - \bar X) B']\bar x = \mathbf{0}_n, \label{eq:equi_B'}
\end{equation}
which similarly implies that $\bar x$ is a positive right eigenvector for $-I+(I - \bar X) B'$ associated to the simple eigenvalue at the origin. Let $v^\top$ be the associated left eigenvector, normalised to satisfy $v^\top \bar x = 1$. Notice that our definition of $B'$ implies that $A$ and $B'$ define the infection matrix for two separate single virus systems in \eqref{eq:v1} and \eqref{eq:v2} that have the same endemic equilibrium.

We require that $u$ and $v$ be linearly independent, and this can be straightforwardly achieved by selecting an appropriate $B'$ when given $A$. 
Indeed, we present Lemma~\ref{cor:main_paper} in the sequel, showing a procedure to select $B'$, when given $A$, to ensure the linear independence of $u$ and $v$. The main result follows, with proof in \ref{app:main_proof}.


\begin{theorem} \label{thm:doubly_stable_method}
Suppose that $A$ and $B'$ are irreducible nonnegative matrices, with $\rho(A) > 1$ and $\rho(B^\prime) > 1$, that satisfy \eqref{eq:equi_A} and \eqref{eq:equi_B'}, respectively. 
Suppose further that $u^\top$ and $v^\top$, as defined above, are linearly independent. Then there exists $\delta x \in \mathbb{R}^n$ with arbitrarily small Euclidean norm and satisfying 
\begin{align}
    u^{\top}[\bar X(I -\bar X)^{-1}] \delta x &> 0 \label{eq:deltax_1} \\
    v^{\top}[\bar X(I -\bar X)^{-1}] \delta x &< 0. \label{eq:deltax_2}
\end{align}
Furthermore, there exists $\delta B \in \mathbb{R}^{n\times n}$ such that $B^\prime + \delta B$ is an irreducible nonnegative matrix, and $\delta B$ also satisfies
\begin{equation}\label{eq:delta_B}
    \delta B \bar x=[(I- \bar X)^{-2}-B']\delta x.
\end{equation}
Then, with $B := B' + \delta B$, for the bivirus network in \eqref{eq:bivirus_dynamics}, both the survival-of-the-fittest equilibria $(\bar x, \vect 0_n)$ and $(\vect 0_n, \bar y)$ are locally exponentially stable, and $\bar y = \bar x + \delta x+o(\delta)$.
\end{theorem}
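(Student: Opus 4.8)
The plan is to read the whole construction as a \emph{first-order perturbation of a doubly degenerate configuration}. By Proposition~\ref{prop:boundary_stablility_cond}, local exponential stability of $(\bar x, \vect 0_n)$ and $(\vect 0_n, \bar y)$ is equivalent to $\rho((I-\bar X)B)<1$ and $\rho((I-\bar Y)A)<1$. At the unperturbed point $B=B'$, $\bar Y=\bar X$, Eqs.~\eqref{eq:equi_A}–\eqref{eq:equi_B'} say that $(I-\bar X)A$ and $(I-\bar X)B'$ are both nonnegative, irreducible, with common positive right Perron eigenvector $\bar x$ at Perron eigenvalue exactly $1$, and with normalized positive left Perron eigenvectors $u^\top$ and $v^\top$ (so $u^\top(I-\bar X)A=u^\top$, $v^\top(I-\bar X)B'=v^\top$, $u^\top\bar x=v^\top\bar x=1$). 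Thus both spectral radii sit exactly at the critical value $1$, and the whole task is to push them \emph{simultaneously} strictly below $1$ with a single small perturbation, governed by standard first-order perturbation of a simple eigenvalue, $\rho(M+\delta M)=\rho(M)+w^\top\delta M\,r+o(\|\delta M\|)$ with $w^\top,r$ the left/right Perron eigenvectors.

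First I would dispose of the existence of $\delta x$. Set $p^\top:=u^\top\bar X(I-\bar X)^{-1}$ and $q^\top:=v^\top\bar X(I-\bar X)^{-1}$. Since $\bar X(I-\bar X)^{-1}$ is an invertible positive diagonal matrix, linear independence of $u$ and $v$ forces linear independence of $p$ and $q$, so the map $\delta x\mapsto(p^\top\delta x,\,q^\top\delta x)$ is onto $\mathbb{R}^2$; hence some $\delta x$ gives $p^\top\delta x>0$ and $q^\top\delta x<0$, which are exactly \eqref{eq:deltax_1}–\eqref{eq:deltax_2}, and rescaling makes $\|\delta x\|$ arbitrarily small. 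The two inequalities are then shown to be precisely the two stability conditions. For $(\bar x,\vect 0_n)$: perturbing $B'\to B'+\delta B$ changes $(I-\bar X)B'$ by $\delta M=(I-\bar X)\delta B$, so $\rho((I-\bar X)B)=1+v^\top(I-\bar X)\delta B\,\bar x+o$; substituting \eqref{eq:delta_B} and using $v^\top(I-\bar X)B'=v^\top$ collapses the correction to $v^\top\bar X(I-\bar X)^{-1}\delta x=q^\top\delta x<0$, giving $\rho((I-\bar X)B)<1$. For $(\vect 0_n,\bar y)$: with $A\bar x=(I-\bar X)^{-1}\bar x$ and the perturbation $(I-\bar Y)A=(I-\bar X)A-\diag(\delta x)A+o$, the correction is $-u^\top\diag(\delta x)(I-\bar X)^{-1}\bar x=-u^\top\bar X(I-\bar X)^{-1}\delta x=-p^\top\delta x<0$, giving $\rho((I-\bar Y)A)<1$.

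The step requiring care is justifying $\bar y=\bar x+\delta x+o(\delta)$, on which the second computation rests. The single-virus endemic equilibrium of \eqref{eq:v2} is, near $B'$ (where $\rho(B')>1$), a smooth function of $B$ because its Jacobian $J=-I+(I-\bar X)B'-\diag(B'\bar x)$ at $\bar x$ is Hurwitz by exponential stability of the single-virus endemic equilibrium. Using $\diag(B'\bar x)=\bar X(I-\bar X)^{-1}$, one gets $-J=I-(I-\bar X)B'+\bar X(I-\bar X)^{-1}=:G$ invertible, and linearizing $\bar y=(I-\bar Y)B\bar y$ shows the first-order shift obeys $G\,\delta x=(I-\bar X)\delta B\,\bar x$, which is exactly \eqref{eq:delta_B} premultiplied by $(I-\bar X)$. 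Hence the prescribed $\delta B$ realizes the claimed expansion. It then remains to produce an \emph{admissible} $\delta B$: \eqref{eq:delta_B} is an underdetermined linear system $\delta B\,\bar x=c$ with $c=[(I-\bar X)^{-2}-B']\delta x$, which I would solve row by row by loading the entire row-$i$ correction onto a single strictly positive entry of $B'$ (one exists in every row since $B'$ is irreducible and hence has no zero row), so that $B=B'+\delta B$ keeps the sign pattern of $B'$; for $\|\delta x\|$ small this preserves nonnegativity and irreducibility, and $\rho(B)>1$ follows by continuity.

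The main obstacle is coordinating all four conclusions under one perturbation: nonnegativity/irreducibility of $B$, the equilibrium expansion via the implicit function theorem, and the two strict spectral-radius inequalities. Each is controlled by a single scalar $\epsilon=\|\delta x\|$ once a direction is fixed, and because both first-order corrections are strictly signed and linear in $\delta x$, choosing $\epsilon$ small enough makes every $o(\cdot)$ term subdominant and all conditions hold at once. The structural insight that makes this possible is twofold: the identity $G=-J$ with $J$ Hurwitz, which both legitimizes the linearization and pins the equilibrium shift to $\delta x$; and the specific coupling \eqref{eq:delta_B}, engineered so that one common vector $\delta x$ simultaneously controls the two otherwise-separate spectral radii through the linearly independent functionals $p^\top$ and $q^\top$.
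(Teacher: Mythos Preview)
Your proof is correct and follows essentially the same first-order eigenvalue-perturbation strategy as the paper's, reducing both spectral-radius conditions to the sign conditions \eqref{eq:deltax_1}--\eqref{eq:deltax_2} via the equilibrium identities $A\bar x=(I-\bar X)^{-1}\bar x$ and $v^\top(I-\bar X)B'=v^\top$. Two minor execution differences are worth noting: you invoke the implicit function theorem (through the Hurwitz Jacobian of the single-virus endemic equilibrium) to obtain $\bar y=\bar x+\delta x+o(\delta)$, whereas the paper verifies directly that $[I-(I-\bar X-\delta X)(B'+\delta B)](\bar x+\delta x)=o(\delta)$; and your row-by-row loading of $\delta B$ onto one positive entry per row of $B'$ is simpler than the paper's construction, which passes through $F=(I-\bar X)^{-2}-B'$ (shown to be an irreducible nonsingular $M$-matrix, hence $F^{-1}>0$) in order to manufacture a $\delta B$ with exactly two nonzero entries---that extra structure serves the paper's subsequent explicit four-step design procedure rather than the existence proof itself.
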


Provided $\delta x$ and $\delta B$ are sufficiently small, the resulting bivirus network in \eqref{eq:bivirus_dynamics} is such that either virus~$1$ or virus~$2$ may 
win a survival-of-the-fittest battle, depending on whether the initial states $(x(0), y(0))$ are in the region of attraction for $(\bar x, \vect 0_n)$ or $(\vect 0_n, \bar y)$, respectively. Our result does not exclude other limiting behavior, such as converging to a coexistence equilibrium where every population~$i$ has individuals infected with virus~$1$ and virus~$2$. This is because the regions of attraction for $(\bar x, \vect 0_n)$ and $(\vect 0_n, \bar y)$ together cannot cover all of $\intr(\Delta)$~\citep{chiang2015stability}, since there will be points in $\intr(\Delta)$ which are on the boundary of one or both regions of attraction (and thus cannot be part of the region). Indeed, there exist numerical examples where both $(\bar x, \vect 0_n)$ and $(\vect 0_n, \bar y)$ locally exponentially stable, and there are also multiple locally exponentially stable coexistence equilibria~\citep{anderson2022_bivirus_PH}.

\subsection{Systematic construction procedure}\label{ssec:procedure}

To begin, we provide a specific method for constructing a suitable $B'$, with proof given in \ref{app:main_proof}.
Let $e_i$ be the $i$-th basis vector, with $1$ in the $i$-th entry and $0$ elsewhere. 

\begin{lemma}\label{cor:main_paper}
    Let $A$ be an irreducible nonnegative matrix fulfilling \eqref{eq:equi_A} for some $\bar{x}$ such that $\mathbf{1}_n > \bar x > \mathbf{0}_n$. For a fixed but arbitrary $i \in \mathcal{V}$, let $z^\top \neq \vect 0_n$ be chosen to satisfy $z^\top \bar x = 0$ and the $j$-th entry $z_j < 0$ only if $a_{ij} > 0$. Then, there exists a sufficiently small $\epsilon$ such that $B' := A + \epsilon e_i z^\top$ is an irreducible nonnegative matrix. Moreover, $B'$ fulfills the conditions in the hypothesis of Theorem~\ref{thm:doubly_stable_method}: $\rho(B') > 1$, \eqref{eq:equi_B'} is satisfied, and $u$ and $v$ are linearly independent.
\end{lemma}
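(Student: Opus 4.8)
The plan is to verify, one at a time, the four properties that $B' := A + \epsilon e_i z^\top$ must satisfy, exploiting throughout that the perturbation $\epsilon e_i z^\top$ is rank one and alters only row $i$ of $A$. The single observation that drives everything is that $z^\top \bar x = 0$, so that $B' \bar x = A \bar x$: the perturbation leaves the action of the matrix on the equilibrium $\bar x$ untouched, which is exactly what preserves the shared endemic equilibrium.

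First I would fix the admissible range of $\epsilon$. In row $i$ the $j$-th entry of $B'$ is $a_{ij} + \epsilon z_j$. Whenever $z_j \geq 0$ this is nonnegative for every $\epsilon \geq 0$; whenever $z_j < 0$ the hypothesis guarantees $a_{ij} > 0$, so choosing $0 < \epsilon < \min_{j : z_j < 0} a_{ij}/|z_j|$ keeps $a_{ij} + \epsilon z_j > 0$. With this choice $B'$ is nonnegative and, moreover, every positive entry of $A$ remains positive in $B'$, so the support of $B'$ contains that of $A$; since $A$ is irreducible, so is $B'$. For \eqref{eq:equi_B'}, I apply $(I-\bar X)$ to both sides of $B'\bar x = A\bar x$ and invoke \eqref{eq:equi_A} in the form $(I-\bar X)A\bar x = \bar x$, giving $(I-\bar X)B'\bar x = \bar x$, i.e. $[I-(I-\bar X)B']\bar x = \mathbf 0_n$.

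Next I would establish $\rho(B') > 1$. The previous step shows $\bar x \gg \mathbf 0_n$ is an eigenvector of the nonnegative irreducible matrix $(I-\bar X)B'$ with eigenvalue $1$, so by Perron--Frobenius $\rho((I-\bar X)B') = 1$. Since $\bar X$ is a positive diagonal matrix with $\bar x_i < 1$ for all $i$, the fourth listed property of $M$- and Metzler matrices yields $\rho(B') > \rho((I-\bar X)B') = 1$. With $\rho(B') > 1$ and $B'$ irreducible, Lemma~\ref{lem:useful} applies to $B'$: the matrix $M_{B'} := -I + (I-\bar X)B'$ is a singular irreducible Metzler matrix whose zero eigenvalue is simple, with a positive left eigenvector $v^\top \gg \mathbf 0_n^\top$ spanning its one-dimensional left null space.

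Finally, linear independence of $u$ and $v$ is the crux, and it is where the rank-one structure pays off. Writing $M_A := -I + (I-\bar X)A$ and using $(I-\bar X)e_i = (1-\bar x_i)e_i$, I obtain the identity $M_{B'} = M_A + \epsilon(1-\bar x_i)e_i z^\top$. Left-multiplying by $u^\top$ and using $u^\top M_A = \mathbf 0_n^\top$ from Lemma~\ref{lem:useful} gives $u^\top M_{B'} = \epsilon(1-\bar x_i)\,u_i\, z^\top$. Here $\epsilon > 0$, $1-\bar x_i > 0$, $u_i > 0$ (since $u \gg \mathbf 0_n$), and $z^\top \neq \mathbf 0_n^\top$ by hypothesis, so $u^\top M_{B'} \neq \mathbf 0_n^\top$; hence $u$ does not lie in the left null space of $M_{B'}$, which is spanned by $v$, and therefore $u$ and $v$ are linearly independent. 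I expect this last step to be the main obstacle only in the sense of isolating the right identity: once the rank-one difference $M_{B'} - M_A = \epsilon(1-\bar x_i)e_i z^\top$ is recognized, the nonvanishing of $z$ immediately forces $u \notin \mathrm{span}(v)$. The earlier steps are routine verifications resting on the listed matrix-theoretic facts and on Lemma~\ref{lem:useful}.
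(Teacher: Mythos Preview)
Your proof is correct and follows essentially the same approach as the paper's: both establish nonnegativity and irreducibility of $B'$ by bounding $\epsilon$ so that positive entries of row $i$ stay positive, verify \eqref{eq:equi_B'} via $z^\top\bar x=0$, deduce $\rho(B')>1$ from $\rho((I-\bar X)B')=1$ and the fourth listed matrix property, and obtain linear independence from the identity $u^\top M_{B'}=\epsilon(1-\bar x_i)u_i z^\top\neq\mathbf 0_n^\top$. The paper phrases the last step as a contradiction and orders the verifications slightly differently, but the underlying computation is identical.
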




A procedure to systematically construct a bivirus network according to Theorem~\ref{thm:doubly_stable_method} is now presented.

{\bf Step 1.} Consistent with Theorem~\ref{thm:doubly_stable_method}, we begin by assuming that we are given an irreducible nonnegative matrix $A$ with spectral radius greater than 1.
Construct the matrix $B^\prime = A + \epsilon e_i z^\top$ according to Lemma~\ref{cor:main_paper}. 

{\bf Step 2.} With $u^\top$ and $v^\top$ as defined in Section~\ref{ssec:main_thm}, set $F = (I- \bar X)^{-2}-B'$ and $\tilde u^\top = u^{\top}\bar X(I -\bar X)^{-1}F^{-1}$ and $\tilde v^\top = v^{\top}\bar X(I -\bar X)^{-1}F^{-1}$. Note that $F$ is invertible and $F^{-1}$ is a positive matrix, as detailed in Appendix~\ref{app:main_proof}. Select two integers $j$ and $k$ for which $\tilde u_j/\tilde u_k > \tilde v_j/\tilde v_k$. This is possible since $u^\top$ and $v^\top$ (and thus $\tilde u^{\top}$ and $\tilde v^{\top}$ also) are linearly independent. Select $\alpha > 0$ to satisfy
\begin{equation*}
    \alpha \tilde u_j/\tilde u_k > 1 > \alpha \tilde v_j/\tilde v_k,
\end{equation*}
noting that such an $\alpha$ can always be found. Identify one positive entry in each of the $j$th row and $k$th row of $B'$, say $b_{jp}^\prime$ and $b_{kq}^\prime$. Set $\beta \in (0, b_{kq}^\prime \bar x_q)$. Finally, define the vector $s \in \mathbb R^n$ which has zeros in every entry except $s_k = -\beta$ and $s_j = \alpha\beta$. Compute $\delta x = F^{-1}s$.


{\bf Step 3.} To obtain $\delta B$, set all of its entries to be equal to zero, except that $\delta b_{kq} = -\beta/\bar x_q$ and $\delta b_{jp} = \alpha \beta/\bar x_p$. Then, set $B = B^\prime + \delta B$.

{\bf Step 4.} (If necessary). Check that the resulting $B$ satisfies the necessary and sufficient condition for local stability outlined in Proposition~\ref{prop:boundary_stablility_cond}, and if not, iterate Step~1--3 with different choices of $z^\top$, $\epsilon$, $\alpha$, and $\beta$. The theoretical analysis uses arguments centred on perturbation methods (see Appendix~\ref{app:main_proof}), and the $\delta x$ and $\delta B$ must be sufficiently small. The design choices of the construction method are $z^\top$, $\epsilon$, $\alpha$, and $\beta$, and hence one may need to adjust/tune these values in order to obtain a suitable $B$. 
Nonetheless, existence of such $B$ is guaranteed by Theorem~\ref{thm:doubly_stable_method}.

With $z^\top$, $\epsilon$, $\alpha$, and $\beta$ as the design choices, there are numerous potential options which give rise to different suitable $B$. For instance, since $\bar x > \vect 0_n$, $z$ must have at least one positive and one negative entry in order to satisfy $z^\top \bar x = 0$, and so one straightforward implementation is to set $z_k = 1$ and $z_j = -\bar x_k/\bar x_j$, for arbitrary $k,j$. For the selected index $j$, we need $\epsilon < \min_{\{l\in \mathcal{V}: a_{lj} > 0\}} a_{lj}/z_j$ to ensure that $B'$ is nonnegative.
Then, $B'$ is equal to $A$ except for the following entries: for the particular choice of $e_i$, $b'_{im} = a_{im} +\epsilon z_m$ for any $z_m\neq 0$. Next, $B$ is equal to $B'$ except the following entries: $b_{kq} = b'_{kq}-\beta/\bar x_q$ and $b'_{jp} = a_{jp}+\alpha\beta/\bar x_p$ for the indices $j,k,p,q$ identified in Step~2. 

If $A$ is a positive matrix, corresponding to an all-to-all connected virus~$1$ layer, then a more straightforward approach can be taken. We set $B'= A + \epsilon \vect 1_n z^\top$, with $z^\top \bar x = 0$ and $\epsilon$ sufficiently small to guarantee $B'$ is a positive matrix. Then, solve \eqref{eq:deltax_1} and \eqref{eq:deltax_2} for $\delta x$ using standard linear programming methods. Next, compute a solution $\delta B$ for \eqref{eq:delta_B} and apply a scaling constant to decrease the entries of $\delta B$ to ensure that $B = B'+\delta B$ remains a positive matrix. The challenge occurs when $A$ and $B'$ are not positive matrices, because any $\delta B$ satisfying \eqref{eq:delta_B} must have both positive and negative entries. This can be problematic if we obtain a solution $\delta B$ that has a negative entry where $B'$ has a zero entry but we also require $B$ to be nonnegative irreducible. The above four-step procedure resolves this issue, by producing a $\delta B$ whose single negative entry is in the same position corresponding to a positive entry in $B'$, and the former is smaller in magnitude than the latter.


In order to apply the four-step construction procedure, one requires knowledge of the infection matrix $A$, from which one can compute the endemic equilibrium $\bar x$ associated with the single virus system \eqref{eq:v1} (see below Proposition~\ref{prop:boundary_stablility_cond}). It is important to stress that only knowledge of the single virus system is needed, as opposed to knowledge of any bivirus system. From knowledge of $A$ and $\bar x$, one would construct a suitable $B^\prime$, and subsequently compute $\delta x$ and $\delta B$ as necessary. 

\begin{figure*} [!htb]
\centering
    \subfloat[]{\includegraphics[width= 0.3\textwidth]{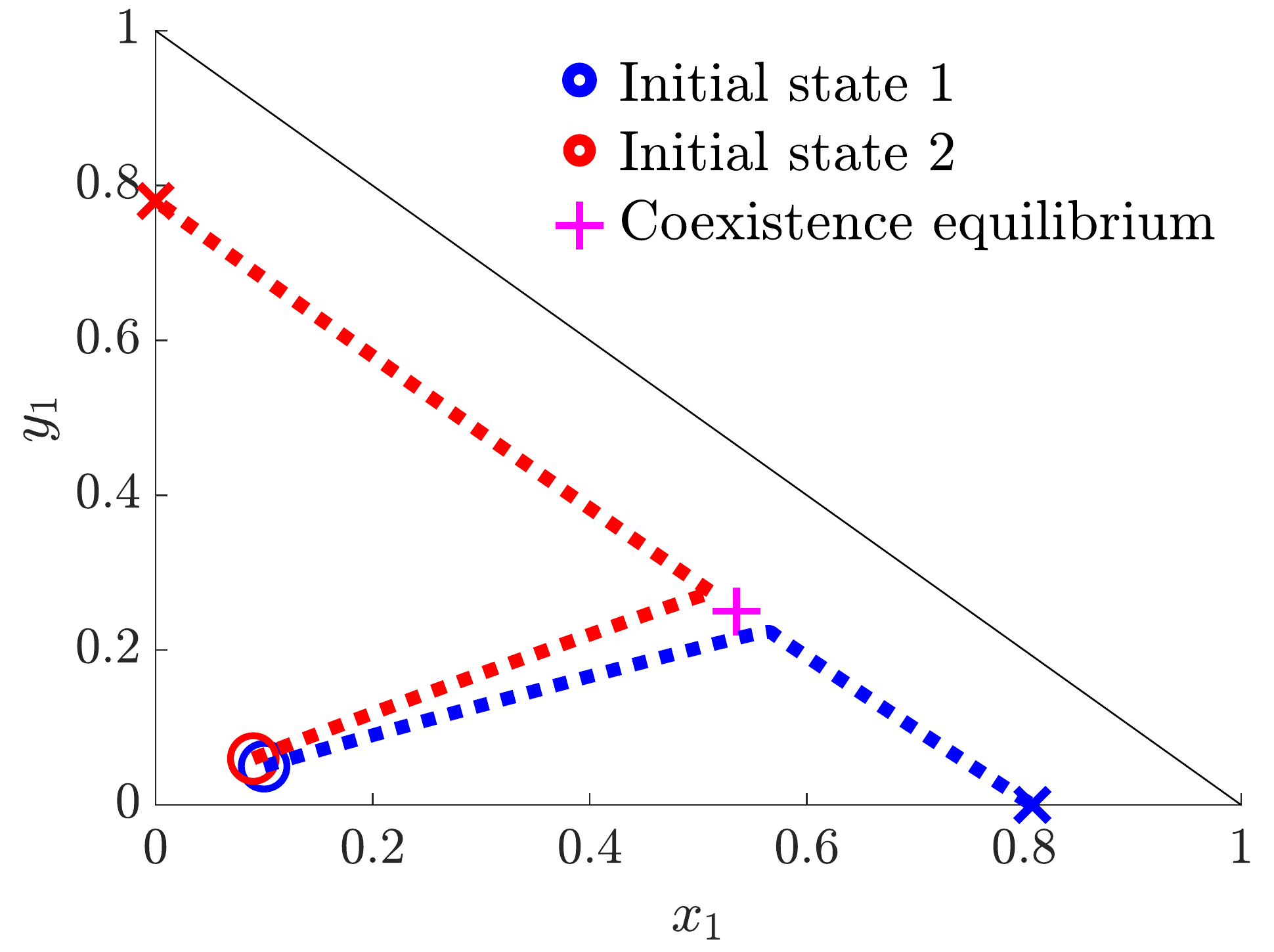}\label{fig:2a}}
    \hfill
    \subfloat[]{\includegraphics[width= 0.3\textwidth]{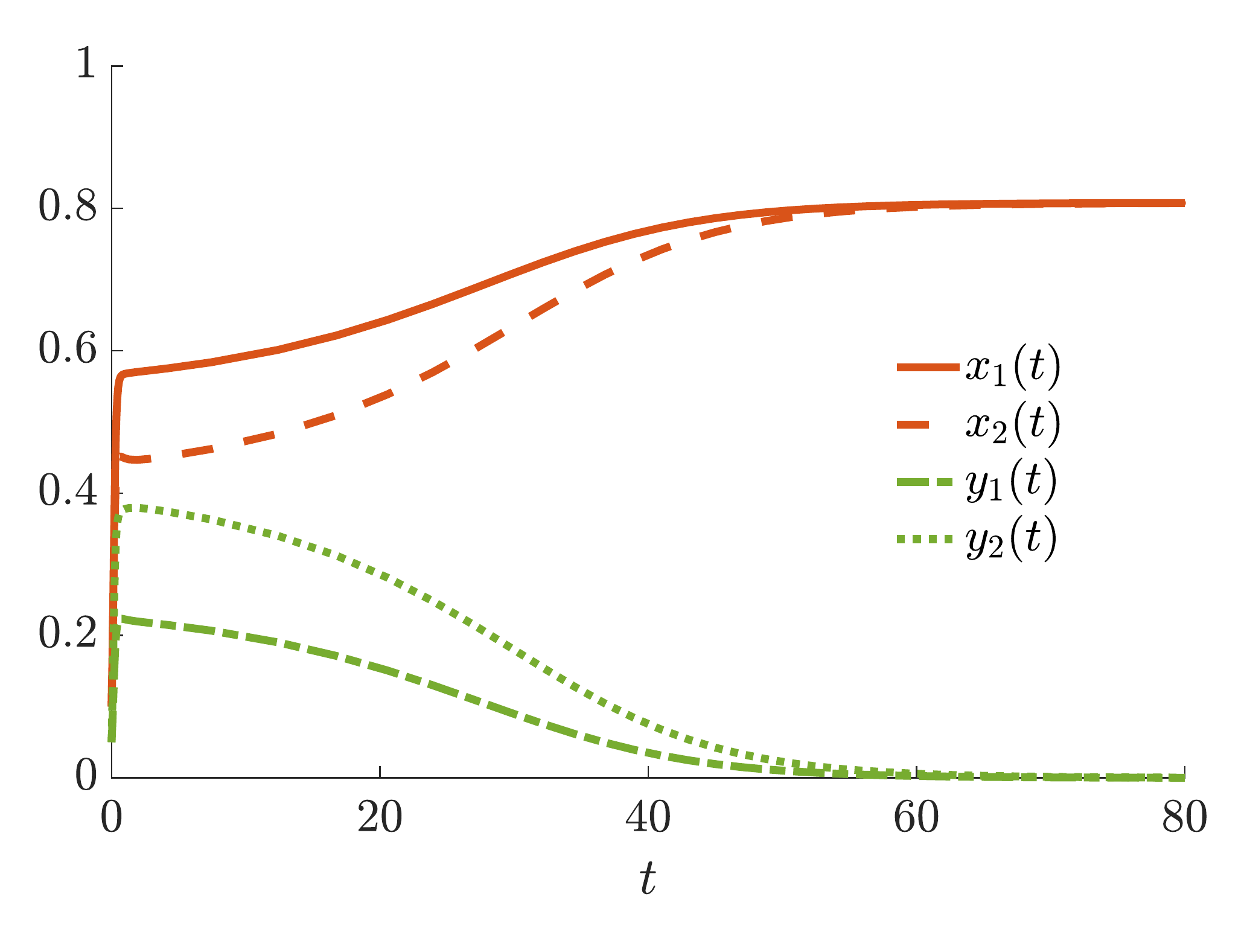}\label{fig:2b}}
    \hfill
    \subfloat[]{\includegraphics[width= 0.3\textwidth]{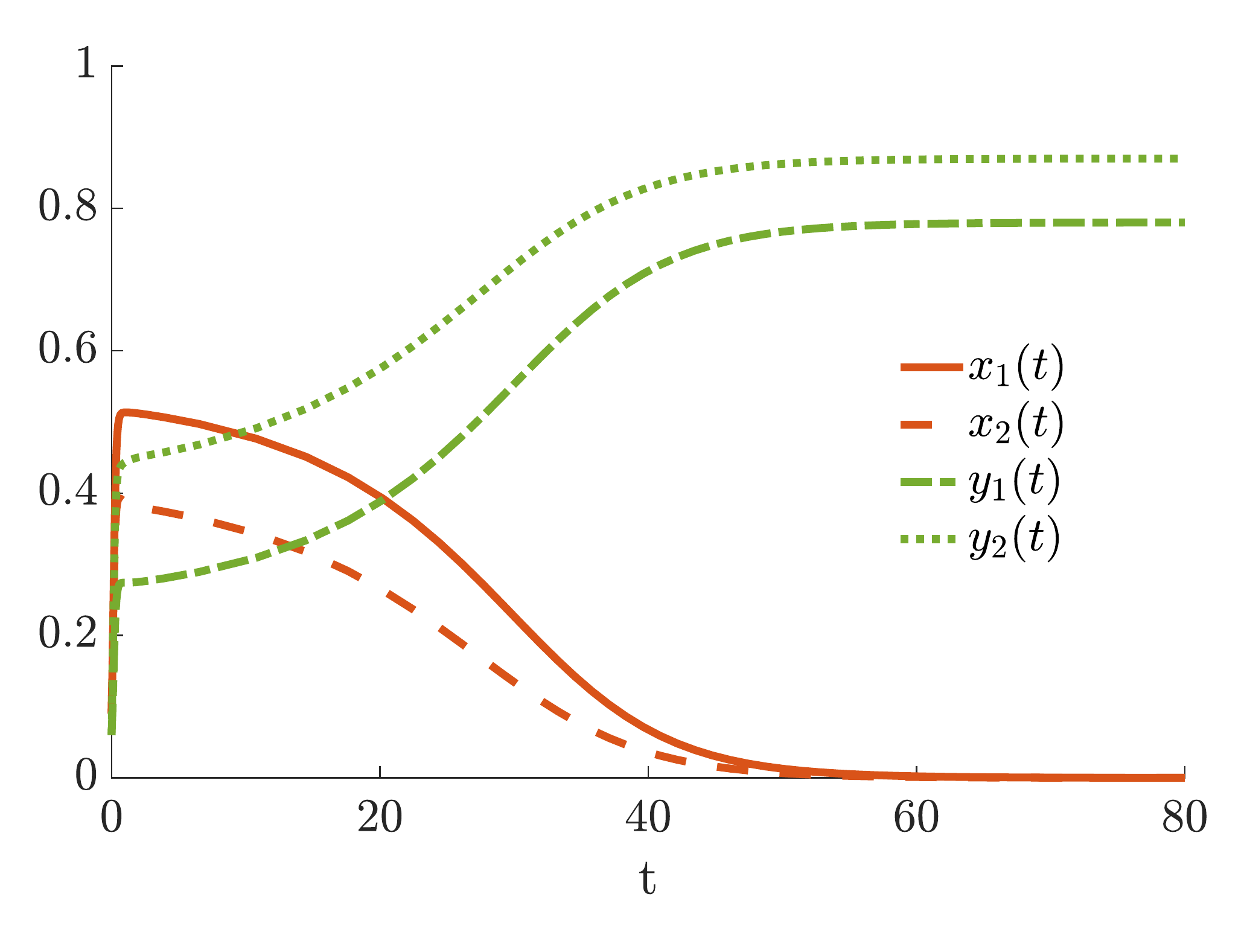}\label{fig:2c}}
    \hfill
    \\
    \subfloat[]{\includegraphics[width= 0.3\textwidth]{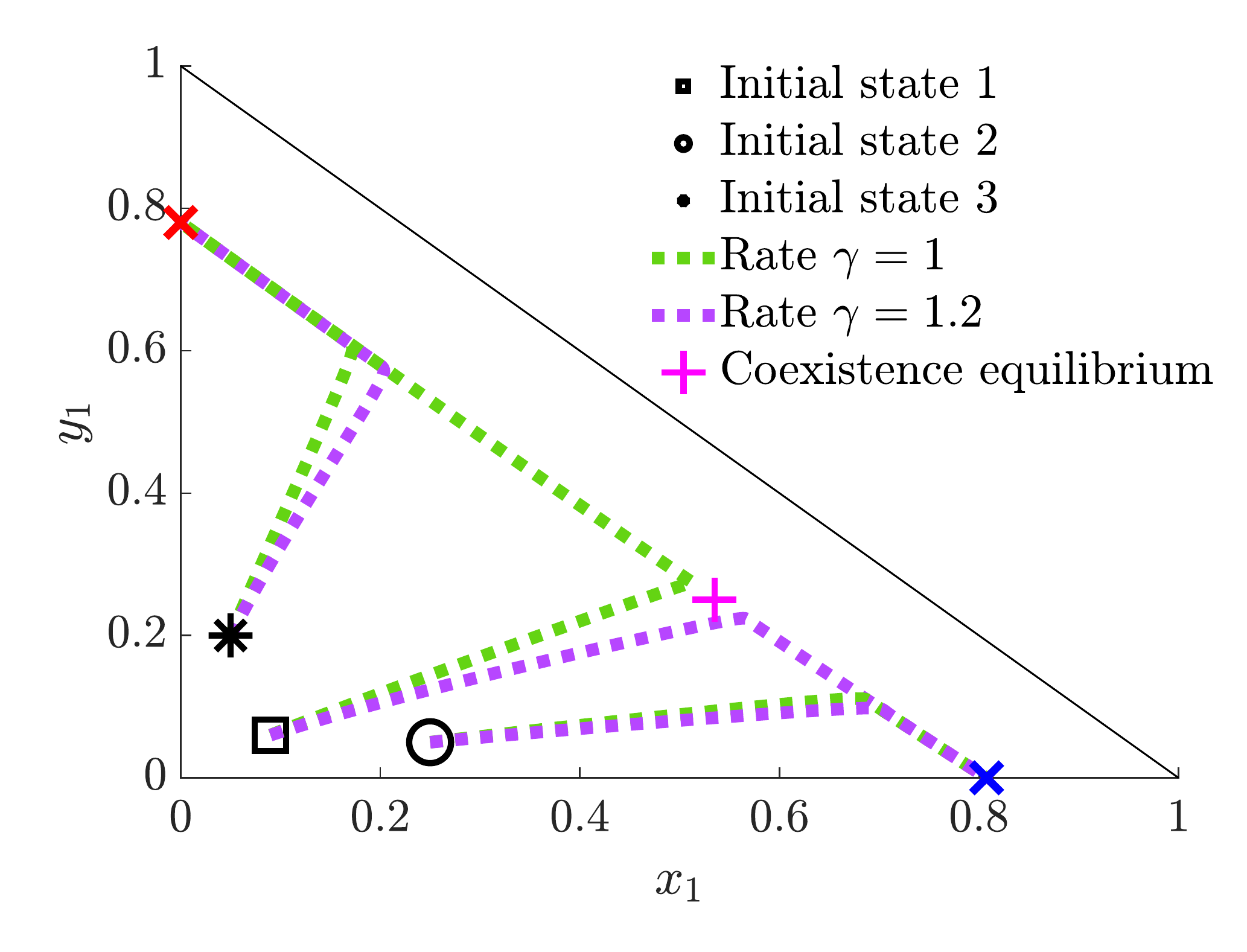}\label{fig:2d}}
    \hfill
    \subfloat[]{\includegraphics[width= 0.34\textwidth]{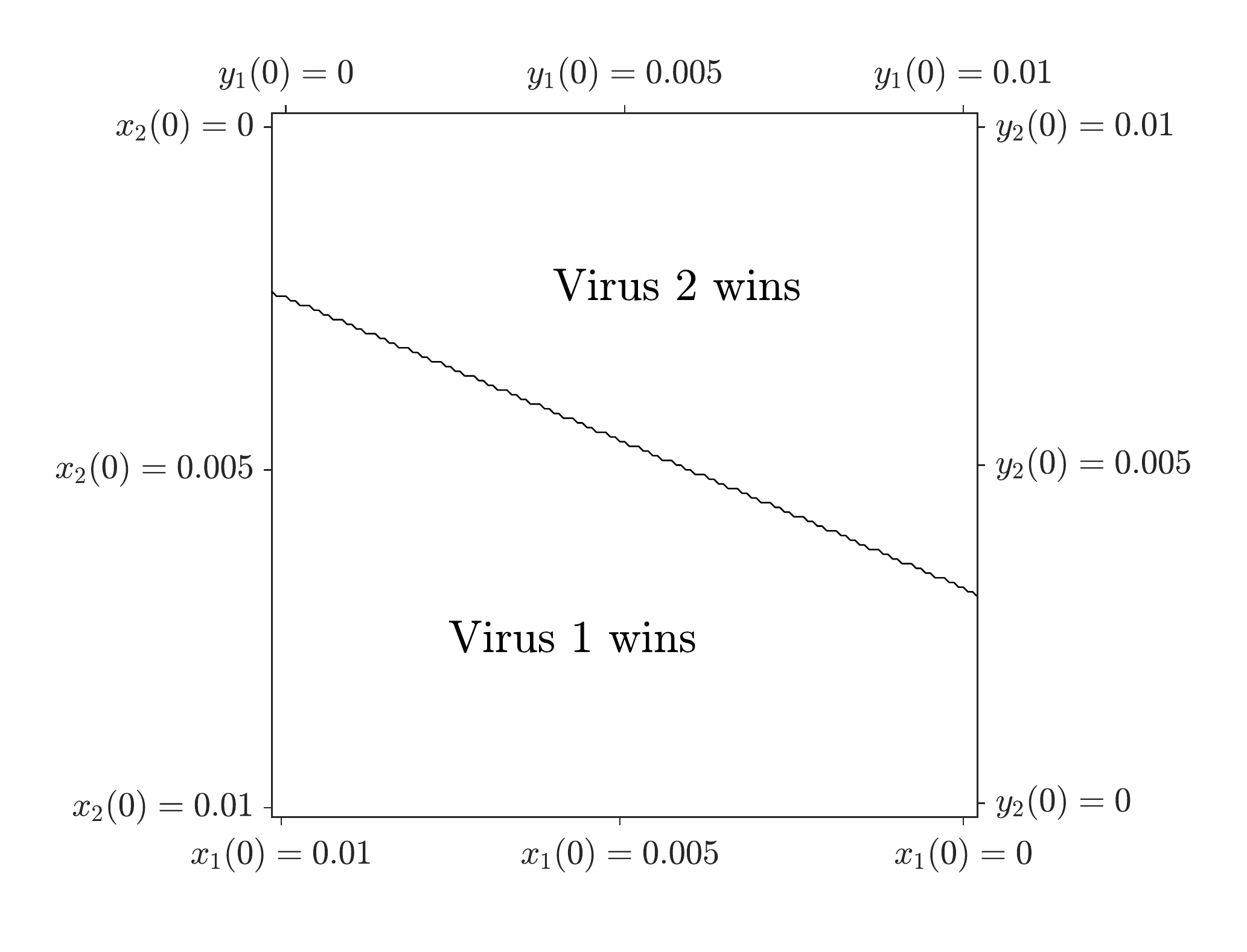}\label{fig:2e}}
    \hfill
    \subfloat[]{\includegraphics[width= 0.34\textwidth]{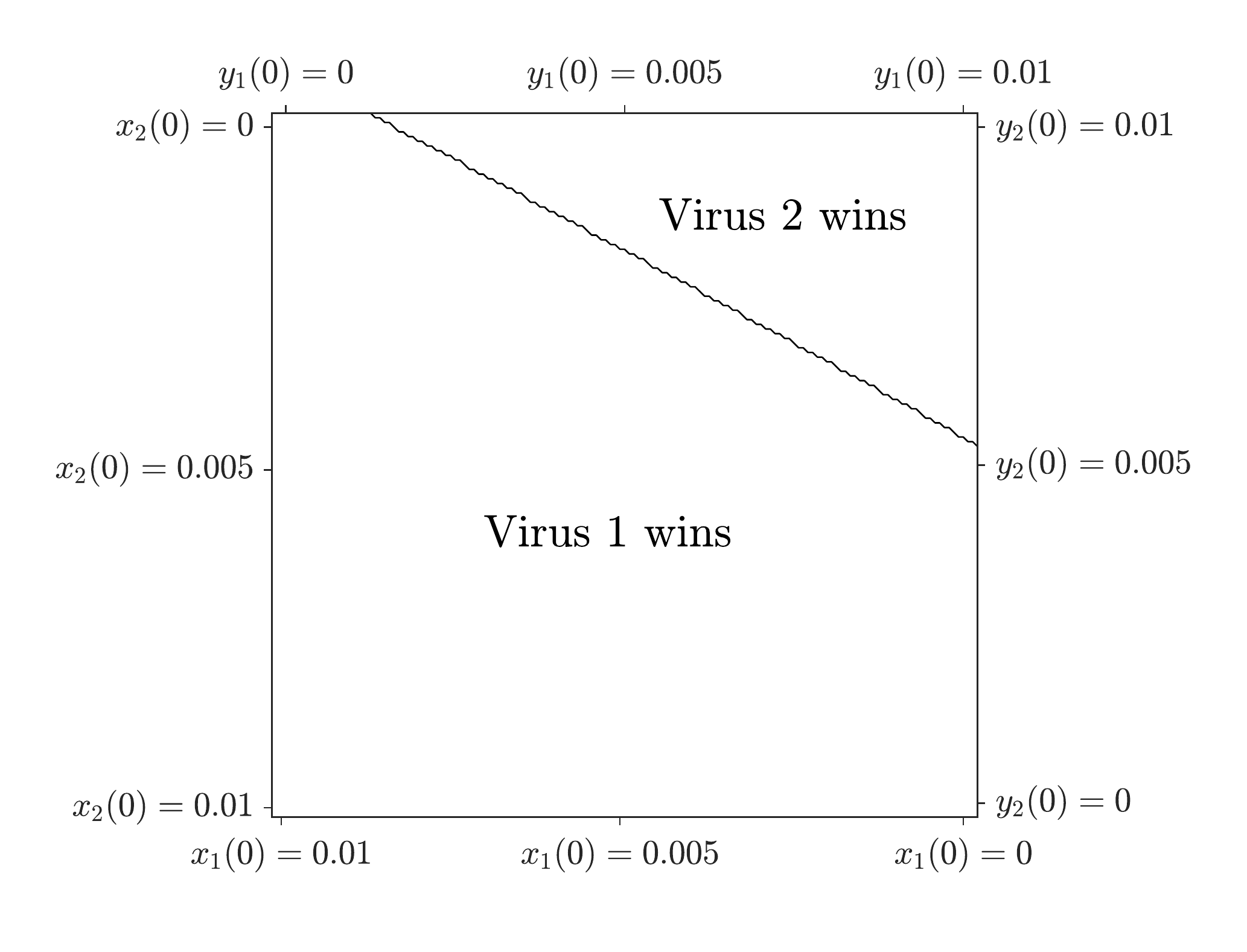}\label{fig:2f}}
    \caption{The dynamics of the two-node case study of \eqref{eq:bivirus_dynamics}. In (a), the trajectories $(x_1(t), y_1(t))$ are shown for two different initial states (blue and red); virus 1 and virus 2 win the survival-of-the-fittest battle in the blue and red trajectories, respectively. In (b) and (c), the time evolution of $(x(t), y(t))$ is shown for the blue and red initial states in (a), respectively. In (d), we show the trajectories $(x_1(t), y_1(t))$ for virus $1$ and virus $2$ of the same speed (green, $\gamma = 1$) and virus 1 that is $1.2$ times faster relative to virus 2 (purple, $\gamma = 1.2$), for different initial states. The winning virus for different initial states is recorded when (e) virus 1 and virus 2 are the same speed and (f) when virus 1 is faster than virus 2, with $\gamma = 1.2$. Note the line where the boundaries of the two regions meet forms part of the stable manifold of the unstable coexistence equilibrium.  }    \label{fig:2}
\end{figure*}


\section{Simulation Case Studies}\label{sec:simulations}

We now present two case studies to illustrate the procedure and the diverse limiting behavior that can be observed, including different survival-of-the-fittest outcomes. Full code at \url{https://github.com/lepamacka/bivirus_code} and \url{https://github.com/mengbin-ye/bivirus}. 

\subsection{Two-node case study}
We consider a setting involving a two-node network, with each layer of the graph being complete. More specifically, we used the following matrices, obtained with the four-step procedure (note that all numerical values are reported at most to four decimal points):
\begin{align}
A &= 
\begin{bmatrix}
3.2 & 2 \\ 
2 & 3.2 \\ 
\end{bmatrix}
, 
\,\,\,\,\,\,
B =
\begin{bmatrix}
4.2 & 0.312 \\
6.1318 & 2.2 \\ 
\end{bmatrix}
.
\end{align}
The two single virus systems defined using the $A$ and $B$ above (see \eqref{eq:v1} and \eqref{eq:v2}) have the following two endemic equilibria $\bar{x} = [0.8077, 0.8077]^\top$ and $\bar{y} = [0.7801, 0.8699]^\top$,
respectively. These define two survival-of-the-fittest equilibria $(\bar x, \vect 0_n)$ and $(\vect 0_n, \bar y)$ for the bivirus system in \eqref{eq:bivirus_dynamics}. We then obtain $\rho((I-\bar{Y})A) = 0.9276$ and $\rho((I-\bar{X})B) = 0.9436$,
%
which establishes that each survival-of-the-fittest equilibrium is locally exponentially stable, due to Proposition~\ref{prop:boundary_stablility_cond}. For $n = 2$, one can analytically compute coexistence equilibria, see~\citep{ye2022_bivirus}. We used Maple to analytically identify a unique coexistence equilibrium $(\Tilde{x}, \Tilde{y})$:
\begin{align}
    \Tilde{x} 
    = 
    \begin{bmatrix}
    0.5467 &
    0.4180
    \end{bmatrix}^\top
    ,
    \,\,
    \Tilde{y} 
    = 
    \begin{bmatrix}
    0.2418 &
    0.4101
    \end{bmatrix}^\top
    .
\end{align}
The Jacobian matrix of \eqref{eq:bivirus_dynamics} at this coexistence equilibrium has three negative real eigenvalues of $-5.4373$, $-3.8924$ and $-0.7507$ and one unstable eigenvalue of $0.0321$ (see \citep{ye2022_bivirus,ye2022bivirus_survey} for expressions of the Jacobian matrix). 

Fig.~\ref{fig:2a} shows the phase portrait for two initial states in $\intr(\Delta)$ that are close together: Initial state 1, $x(0) = [0.1, 0.1]^\top$ and $y(0) = [0.05, 0.05]^\top$, and Initial state 2, $x(0) = [0.09, 0.09]^\top$ and $y(0) = [0.06, 0.06]^\top$. Different survival-of-the-fittest outcomes occur, with either virus~1 (blue) or virus~2 (red) winning. Figs.~\ref{fig:2b} and \ref{fig:2c} show the time evolution of the blue and red trajectories in Fig.~\ref{fig:2a}, respectively. It is notable that there is a rapid initial transient that takes the system to a point very close to a curve that connects $(\bar x, \vect 0_n)$ to $(\vect 0_n, \bar y)$ and passes through the unstable coexistence equilibrium (magenta cross), followed by a slower convergence to the two survival equilibria.


Separating the time-scales of the two viruses can change the shape of the regions of attraction for $(\bar x, \vect 0_n)$ and $(\vect 0_n, \bar y)$, but the local exponential stability property is unchanged, and thus both regions will always have non-zero Lebesgue measure. Time-scale separation can be easily achieved by introducing a parameter $\gamma > 0$ and modifying \eqref{eq:virus1_dynamics} to be
\begin{equation}\label{eq:virus1_faster}
    \dot x(t) = \gamma\Big(- x(t) + (I-X(t)-Y(t))Ax(t)\Big).
\end{equation}
Adjusting $\gamma$ allows study of scenarios of interest where virus~1 has much faster or slower dynamics relative to virus~2. Fig.~\ref{fig:2d} shows the trajectories for the two viruses having the same speed (green) and virus 1 being faster than virus 2 (purple, $\gamma = 1.2$). We set ``Initial state~1" (square symbol) to be $x(0) = [0.09, 0.09]^\top$ and $y(0) = [0.06, 0.06]^\top$, ``Initial state~2" (circle symbol) as $x(0) = [0.25, 0.25]^\top$ and $y(0) = [0.05, 0.05]^\top$, and ``Initial state~3" ({\Large$*$} symbol) as $x(0) = [0.05, 0.05]^\top$ and $y(0) = [0.2, 0.2]^\top$. 
The unique coexistence equilibrium (magenta cross) obviously remains unchanged for any positive $\gamma$. Thus, from the same initial condition, the virus that survives may depend on the relative speeds of the two viruses, but there are always two nontrivial regions of attraction for the two stable equilibria.

In Fig.~\ref{fig:2e}, we create a $150\times 150$ grid of initial states, imposing a constraint that $x_i(0)+y_i(0) = 0.01$ for $i = 1,2$. For each point on this grid, we simulated the system over a large time window and recorded the particular equilibrium point reached. The figure thus divides the phase plane into two regions of attraction, for initial states that resulted in virus~1 or virus~2 winning the survival-of-the-fittest battle. Fig.~\ref{fig:2f} is generated using the exact same procedure as Fig.~\ref{fig:2e}, except we change the timescale for virus 1 by setting $\gamma=1.2$, resulting in a marked shift in the regions of attraction.It appears that as the dynamics of one virus becomes faster, the region of attraction increases in size, which accords with intuition.


It is known that the region of attraction for an equilibrium forms an open set~\citep{chiang2015stability}, and in our case study there are two locally stable equilibria (the two survival-of-the-fittest equilibria) and two unstable equilibria (the healthy equilibrium and the coexistence equilibrium). Thus, the boundaries of the regions of attraction for $(\bar x, \vect 0_n)$ and $(\vect 0_n, \bar y)$ do not belong to the region of attraction of either, and if the system is initialized at a common point of the two boundaries, then necessarily the trajectories do not converge to either stable equilibrium. In fact, the common boundary of the two regions of attraction forms the stable manifold of the coexistence equilibrium $(\tilde x, \tilde y)$, which has zero Lebesgue measure~\citep{chiang2015stability}. Initial states on this manifold would lead to convergence to $(\tilde x, \tilde y)$, providing a third outcome of the battle, namely coexistence, which is unlikely to be encountered in practice but is not impossible.
Certain trajectories may appear to converge to the unstable equilibrium, but after some time end up moving on to one of the survival-of-the-fittest equilibria.

\subsection{Real-world network case study}
In this section, we present a case study involving a real-world network topology. We consider a mobility network reported in Ref.~\citep{parino2021modelling}, capturing commuting patterns for people between $n=107$ provinces in Italy. The original network $\bar{\mathcal{G}}$, with associated adjacency $\bar A$, is a complete directed graph, i.e., $\bar A$ is a positive matrix but it is not symmetric. Due to differences in commuting patterns, journeys between some provinces were highly frequent, whereas journeys between some other provinces were virtually nonexistent save for a small number of individuals, so the largest and smallest entries of $\bar A$ differed by several orders of magnitude. 
For computational convenience, we first normalize this matrix so that the row sums are equal to $2$, i.e. $\bar A \vect 1_n = 2\vect 1_n$. 
Then, we obtain the matrix $A$ by setting each entry as $a_{ij} =
  \bar a_{ij}$ if $\bar a_{ij} \geq \kappa$, and $a_{ij} = 0$ otherwise,
where the scalar $\kappa > 0$ acts as a threshold. By using $\kappa = 5\times 10^{-5}$, we obtained an $A$ matrix that was nonnegative irreducible but not positive (and thus the graph associated with $A$ is strongly connected but not complete). We normalized $A$ to satisfy $A\vect 1_n = 2\vect 1_n$, and then set $A$ to be the adjacency matrix associated with the network layer of virus~$1$. 

We followed the systematic construction procedure outlined in Section~\ref{ssec:procedure} to obtain $B$. First, note that $\bar x = 0.5\vect 1_n$ due to our normalization. We selected the $48$-th basis vector, i.e., $i = 48$ for the vector $e_i$. The vector $z$ (of dimension $107$) was a vector of zeros, except $z_{48} = 1$ and $z_{55} = -1$, and $\epsilon = 0.2346$.  After following the four step construction procedure, we obtained $B = A$, except the following four entries were adjusted: $b_{48,48}  = a_{48,48}+0.2346$, $b_{48,55}  = a_{48,55} -0.2346$, $b_{48,69}  = a_{48,69} - 0.0205$, $b_{55,100}  = a_{55,100} + 5.0795\times 10^{-4}$.
We can compute that $\rho((I-\bar Y)A) = 0.9999914$ and $\rho((I-\bar X)B) = 0.9999964$, and hence $(\bar x, \vect 0_n)$ and $(\vect 0_n, \bar y)$ are locally exponentially stable. The network is shown in Fig.~\ref{fig:italy_network}, with the matrices $A$ and $B$ found in the online repository mentioned at the start of Section~\ref{sec:simulations}. 

We consider two sets of initial conditions. We first sample $p_i$ and $q_i$ from a uniform distribution $(0,1)$, for all $i \in \{1,\hdots, n\}$. For the first set of initial conditions, we set $x_i(0) = p_i/(p_i+q_i)$ and $y_i(0) = 0.1q_i/(p_i+q_i)$. This ensures that $x_i(0) + y_i(0) < 1$ as required, and the initial virus~$1$ infection level is ten times that of virus~$2$ at any node $i$. For the second set of initial conditions, we set $x_i(0) = p_i/(p_i+q_i)$ and $y_i(0) = 0.5q_i/(p_i+q_i)$. Hence, the initial virus~$1$ infection level is twice as large as that of virus~$2$ for any node $i$. Fig.~\ref{fig:italy_virus1_wins} corresponds to the first set of initial conditions, where virus~$1$ emerges as the winner of the survival-of-the-fittest battle. In contrast, Fig.~\ref{fig:italy_virus2_wins} corresponds to the second set of initial conditions, where virus $2$ wins the battle. Interestingly, virus~$2$ is still able to win the survival-of-the-fittest battle, even if initial infection levels satisfied $x_i(0) = 2 y_i(0)$ for all $i$.


\begin{figure*} 
    \begin{minipage}{0.5\linewidth}
      \subfloat[Virus~1 wins]{\includegraphics[width= 0.8\textwidth]{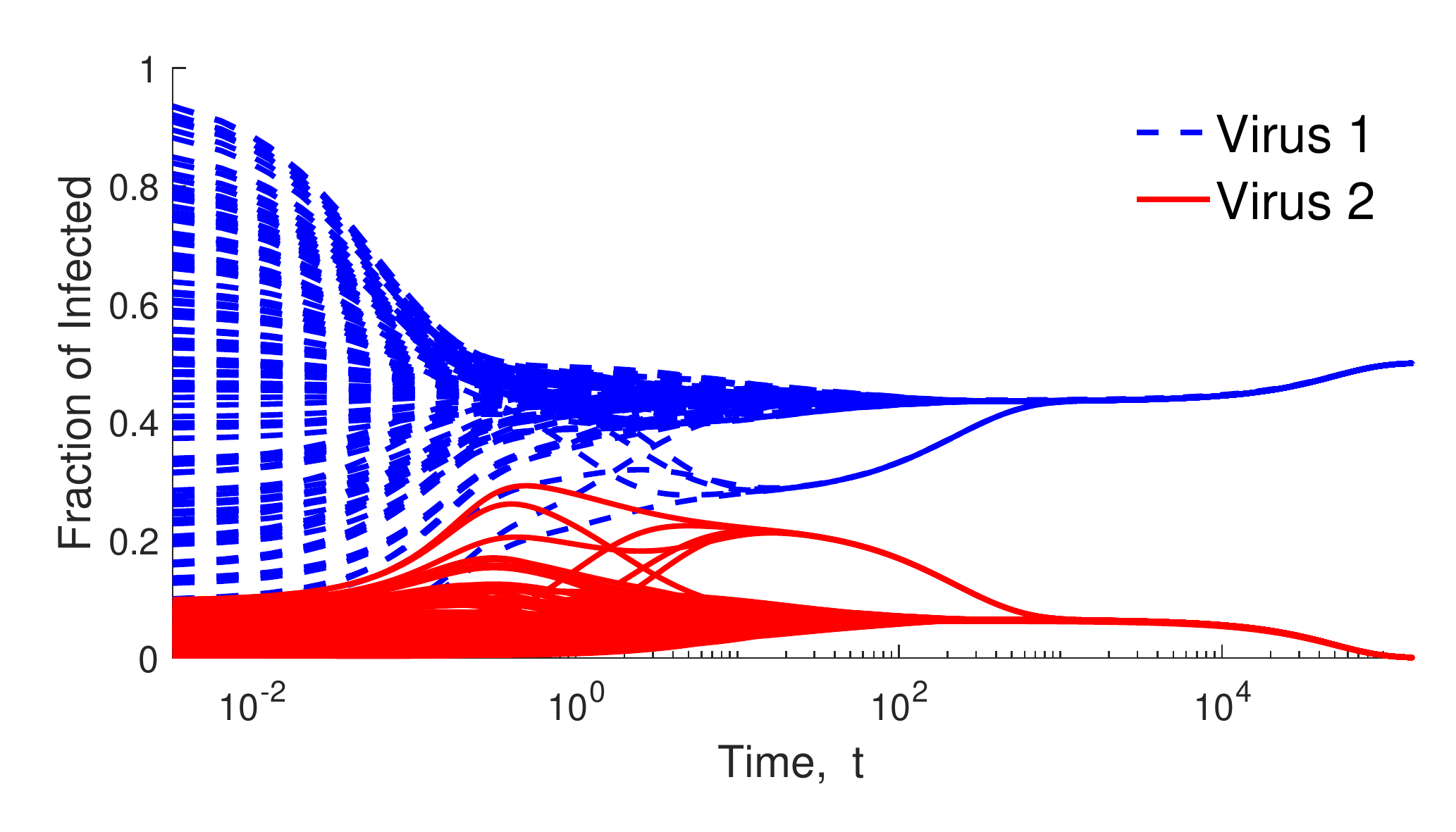}\label{fig:italy_virus1_wins}}
    \vfill
    \subfloat[Virus~2 wins]{\includegraphics[width= 0.8\textwidth]{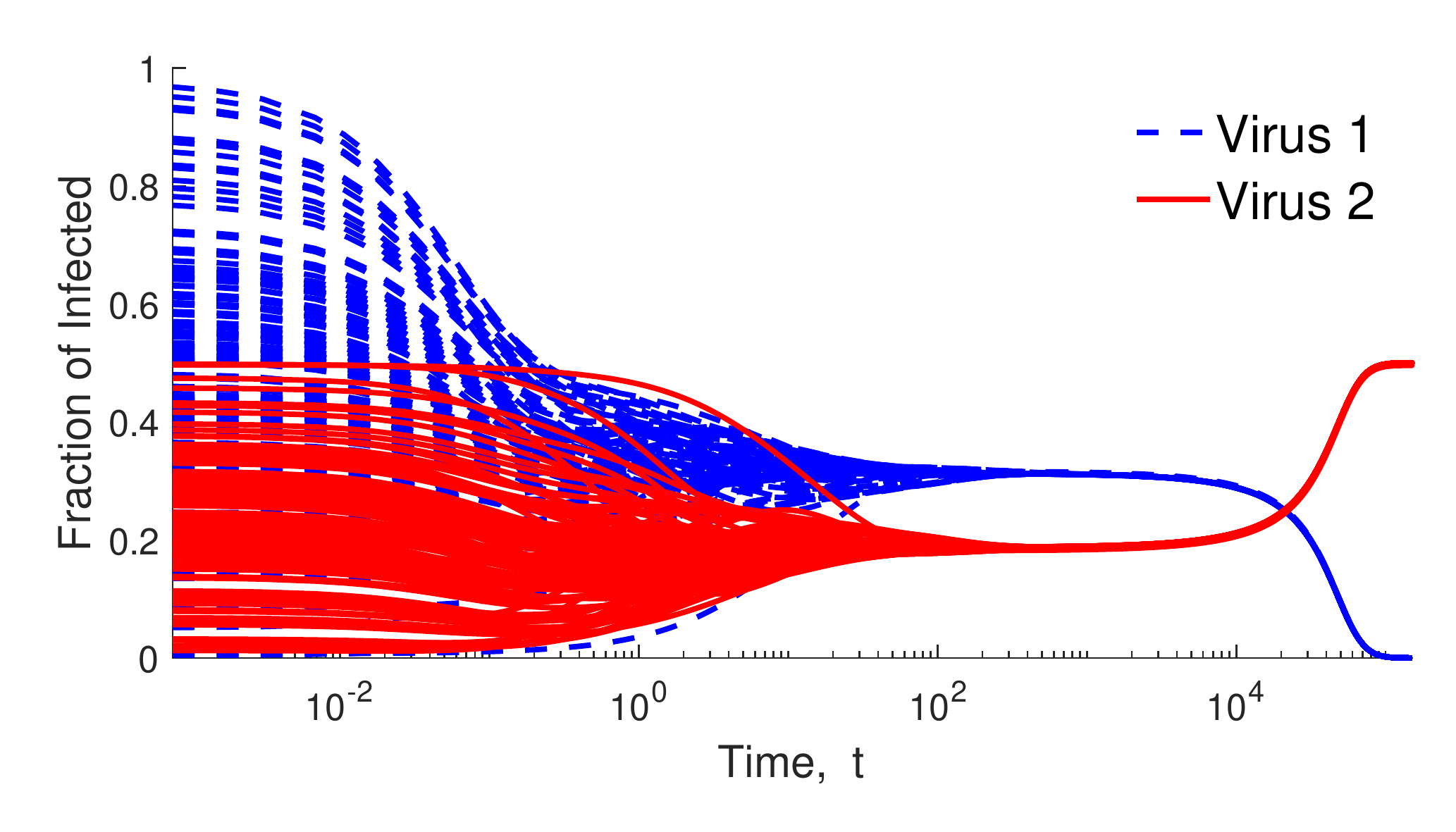}\label{fig:italy_virus2_wins}}
    \end{minipage}
    \hfill
    \begin{minipage}{0.45\linewidth}
      \subfloat[Network of commuting patterns between Italian provinces]{\includegraphics[width= 0.9\textwidth]{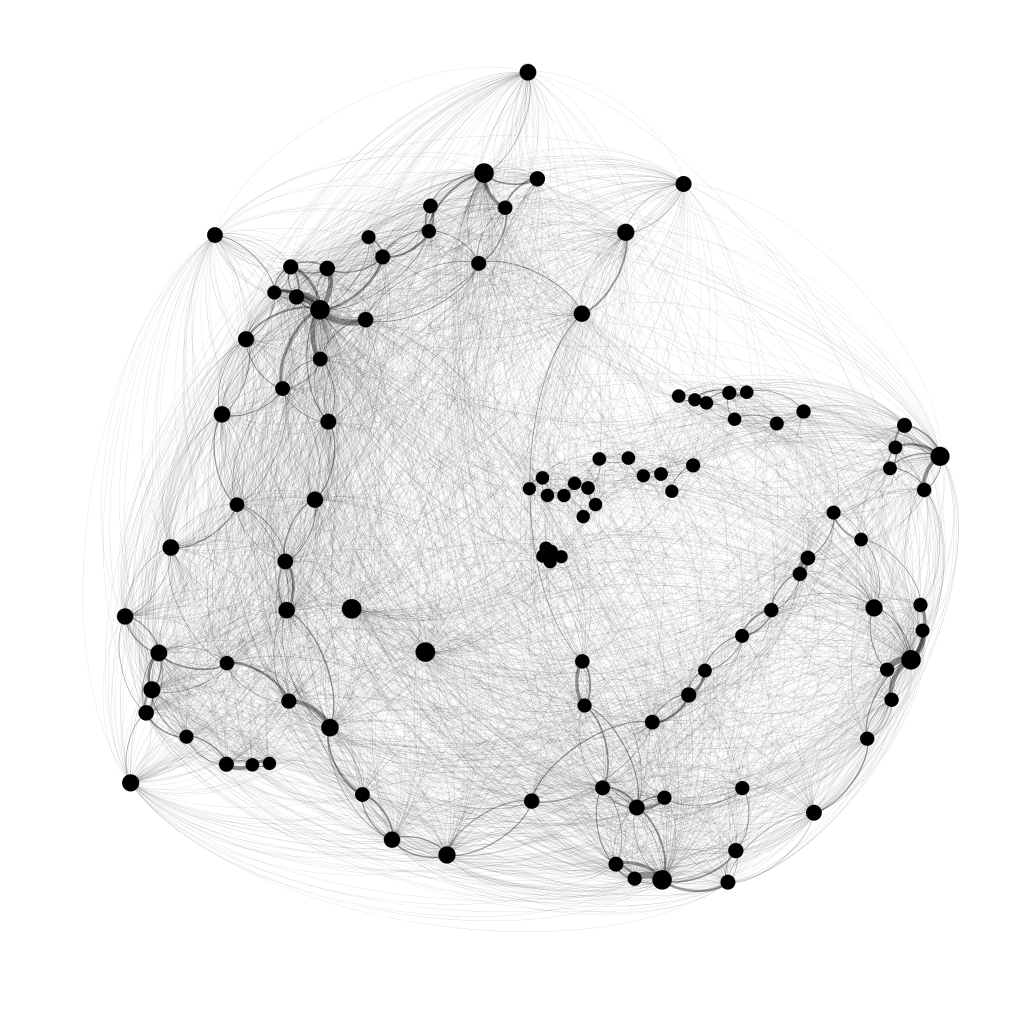}\label{fig:italy_network}}
    \end{minipage}
    \caption{The dynamics of the $n = 107$ example for a mobility network of Italian provinces (note the logarithmic scale of time, $t$, along the horizontal axis). In (a) and (b), the time evolution of $(x(t), y(t))$ shows two different initial states yielding two different survival-of-the-fittest outcomes.}    \label{fig:italy}
\end{figure*}

\section{Conclusion}\label{sec:conclusions}

In summary, we explored a fundamental problem for competing epidemics spreading using the deterministic SIS bivirus network model. 
We provided a rigorous argument which demonstrated that for almost any $A$, there exists a $B$ such that the pair of matrices ($A, B$) satisfied a certain necessary and sufficient condition under which the winner of a survival-of-the-fittest battle depends nontrivially on the initial state of the bivirus network. Then, we presented a systematic procedure to generate such a bivirus network, and studied three numerical examples. This paper significantly expands the known dynamical phenomena of the bivirus model, but should be considered as just a first important step for the epidemic modeling community to explore the diverse new outcomes that are now unlocked for competing epidemic spreading models. A key direction of our future work is to investigate models of three or more competing viruses (multivirus networks)~\citep{janson2020networked}, and to explore how the regions of attraction might change as a function of the relative speeds of the different virus dynamics. Our work did not provide theoretical conclusions on the presence or stability properties of coexistence equilibria, although our example demonstrated a unique unstable coexistence equilibrium; deeper investigation of uniqueness, multiplicity and stability of coexistence equilibria is a current focus~\citep{anderson2022_bivirus_PH}.

\appendix

\section{Proof of Theorem~\ref{thm:doubly_stable_method} and Lemma~\ref{cor:main_paper}}\label{app:main_proof}

In order to prove the claim in Theorem~\ref{thm:doubly_stable_method}, we need the following result on eigenvalue perturbation of a matrix, useful for the subsequent proof.

\begin{lemma}\label{lem:perturb_eig}\cite[pg. 15.2, Fact 3]{hogben2006handbook}. Let $W \in \mathbb{R}^{n\times n}$ be a square matrix with left and right eigenvectors $a^\top, b$ corresponding to a real simple eigenvalue $\lambda$. Suppose that $W$ is perturbed by a small amount $\delta W \in \mathbb{R}^{n\times n}$. Then to first order in $\delta$, $\lambda$ is perturbed by an amount $\delta \lambda$ given by $\delta \lambda=(a^{\top}\delta Wb)/(a^{\top}b)$.
\end{lemma}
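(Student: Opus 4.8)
The plan is to treat this as a classical first-order perturbation argument, exploiting the simplicity of $\lambda$ to guarantee that the relevant eigenpair varies smoothly under the perturbation. I would begin from the two defining relations $Wb=\lambda b$ and $a^{\top}W=\lambda a^{\top}$, the latter expressing that $a^{\top}$ is the left eigenvector associated with $\lambda$. Writing $\delta W=\delta\,E$ with $E$ fixed and $\delta$ a small scalar makes the phrase ``to first order in $\delta$'' precise: I seek the derivative of the perturbed eigenvalue at $\delta=0$.

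Because $\lambda$ is a \emph{simple} eigenvalue, standard analytic perturbation theory (via the implicit function theorem applied to the characteristic equation, or equivalently via the Riesz spectral projection) guarantees that for $\delta W$ sufficiently small there is a unique eigenvalue $\lambda+\delta\lambda$ of $W+\delta W$ near $\lambda$, together with a corresponding right eigenvector $b+\delta b$, and that $\delta\lambda$ and $\delta b$ depend smoothly on the entries of $\delta W$ and vanish as $\delta W\to 0$. I would then substitute into the perturbed eigenvalue equation $(W+\delta W)(b+\delta b)=(\lambda+\delta\lambda)(b+\delta b)$, expand both sides, and use $Wb=\lambda b$ to cancel the zeroth-order terms. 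Discarding the second-order quantities $\delta W\,\delta b$ and $\delta\lambda\,\delta b$ leaves the first-order identity $W\,\delta b+\delta W\,b=\lambda\,\delta b+\delta\lambda\,b$.

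The key step is to eliminate the unknown correction $\delta b$ by left-multiplying this identity by $a^{\top}$. Using $a^{\top}W=\lambda a^{\top}$, the terms $a^{\top}W\,\delta b$ and $\lambda\,a^{\top}\delta b$ cancel exactly, yielding $a^{\top}\,\delta W\,b=\delta\lambda\,(a^{\top}b)$, from which the claimed formula $\delta\lambda=(a^{\top}\,\delta W\,b)/(a^{\top}b)$ follows immediately upon dividing by $a^{\top}b$.

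The main obstacle is ensuring the denominator is nonzero, since the formula is only meaningful when $a^{\top}b\neq 0$. I would justify this directly from simplicity of $\lambda$: if $\lambda$ is simple, the right eigenspace $\mathrm{span}\{b\}$ is one-dimensional and is \emph{not} contained in the range of $W-\lambda I$, while the range of $W-\lambda I$ coincides with the annihilator of the left eigenvector $a^{\top}$; hence $a^{\top}b\neq 0$. This is precisely where the hypothesis of simplicity is indispensable (both for the smooth-dependence claim and for the nondegeneracy of $a^{\top}b$); everything else is routine algebra. In our intended applications $W$ is an irreducible Metzler or nonnegative matrix and $\lambda$ is its Perron--Frobenius eigenvalue, so $a^{\top}$ and $b$ may be chosen entrywise positive and $a^{\top}b>0$ holds automatically, making the formula directly applicable.
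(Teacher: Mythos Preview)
Your argument is correct and is the standard first-order perturbation derivation; note, however, that the paper does not actually prove this lemma---it simply cites it from \cite[pg.~15.2, Fact~3]{hogben2006handbook} and uses it as a black box in the proof of Theorem~\ref{thm:doubly_stable_method}. There is therefore no proof in the paper to compare against, and your write-up would serve as a perfectly adequate self-contained justification of the cited fact.
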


\textbf{Proof of Theorem~\ref{thm:doubly_stable_method}}: There are three key steps to the proof. Step 1 deals with the existence claims. Step 2 establishes that the relation \eqref{eq:delta_B} ensures that the bivirus system in \eqref{eq:bivirus_dynamics}, with infection matrices $A$ and $B= B^\prime + \delta B$, has the survival-of-the-fittest equilibrium associated with virus 2 at $({\bf{0}}_n, \bar x+\delta x)$. Step 3 shows that the inequalities \eqref{eq:deltax_1} and \eqref{eq:deltax_2} satisfied by $\delta x$ cause both of the survival-of-the-fittest equilibiria to be locally stable, exploiting inequality conditions in Proposition~\ref{prop:boundary_stablility_cond}. 

\textit{Step 1}. Since $\vect 0_n < \bar x < \vect 1_n$, $I-\bar X$ is a positive diagonal matrix and its inverse exists. Since $u,v$ are assumed to be linearly independent, the row vectors $u^{\top}[\bar X(I-\bar X)^{-1}]$ and $v^{\top}[\bar X(I-\bar X)^{-1}]$ are also linearly independent, and accordingly $\delta x$ exists satisfying \eqref{eq:deltax_1} and \eqref{eq:deltax_2} and its norm can be chosen arbitrarily by scaling. However, an additional requirement has to be met, viz. that \eqref{eq:delta_B} holds for some $\delta B$ such that $B'+\delta B$ is irreducible and nonnegative. This is straightforward if $B'$ is positive by scaling $\delta B$ to have its entries sufficiently small in magnitude, but not when $B'$ can have zero entries. 
To proceed, set $F=(I-X)^{-2}-B'$ and observe that $F=[(I-X)^{-2}-(I-X)^{-1}]+[(I-X)^{-1}-B']$, i.e. $F$ is the sum of a diagonal positive matrix and an irreducible singular $M$-matrix. Hence it is an irreducible nonsingular $M$-matrix, and accordingly $F^{-1}$ is a positive matrix. The two vectors $\tilde u^{\top}:=u^{\top}\bar X(I-\bar X)^{-1}F^{-1}$ and $\tilde v^{\top}:=v^{\top}\bar X(I-\bar X)^{-1}F^{-1}$ are then both positive and linearly independent. Let
\begin{equation}\label{eq:sdef}
s=\delta B\bar x
\end{equation}

Finding $\delta x$ and $\delta B$ to satisfy \eqref{eq:deltax_1}, \eqref{eq:deltax_2} and \eqref{eq:delta_B} is then equivalent to finding $s$ and $\delta B$ to satisfy \eqref{eq:sdef} and
\begin{equation}\label{eq:sineq}
    \tilde u^{\top}s >0\,,\quad \text{and }\quad \tilde v^{\top}s <0.
\end{equation}

We shall now show that these requirements can be fulfilled by choosing just two of the entries of $s$ to be nonzero, and likewise, just two of the entries of $\delta B$. 

Because $\tilde u$ and $\tilde v$ are linearly independent, there is no nonzero $\mu$ for which $\tilde u=\mu \tilde v$. Thus, there must be two integers, say $j$ and $k$, for which $\tilde u_j/\tilde v_j\neq \tilde u_k/\tilde v_k$.
Without loss of generality (using renumbering if necessary) let us assume that $\tilde u_2/\tilde v_2>\tilde u_1/\tilde v_1$, or equivalently, $\tilde u_2/\tilde u_1>\tilde v_2/\tilde v_1$.
Let $\alpha > 0$ be such that
\begin{equation}\label{eq:alpha}
    \alpha\tilde u_2/\tilde u_1>1>\alpha\tilde v_2/\tilde v_1
\end{equation}
Let $\beta>0$ be a constant to be specified below, and choose $s=[-\beta, \alpha\beta, 0, \hdots, 0]^\top$.
Using \eqref{eq:alpha}, it is immediate that \eqref{eq:sineq} is satisfied. Now to specify $\delta B$, observe that the irreducibility of $B'$ guarantees that  at least one entry of the first row and one entry of the second row are positive, say $b'_{1i}$ and $b'_{2j}$. (They may or may not be in the same column.) Choose $\beta$ such that $0< \beta<b'_{1i}\bar x_i $, and set all entries of $\delta B$ to zero except that
\begin{align}\label{eq:delta_B1}
    \delta b_{1i}&=-\beta/\bar x_i\\
    \delta b_{2j}&=\alpha\beta/\bar x_j \label{eq:delta_B2}
\end{align}
These two definitions ensure that $s=\delta B \bar x$ as required, that $B'+\delta B$ is a nonnegative matrix, and that it is also irreducible since it has the same zero entries as $B'$. To summarize, setting $\delta B$ using \eqref{eq:delta_B1} and \eqref{eq:delta_B2} ensures that $B'+\delta B$ is a nonnegative irreducible matrix, while we select the specific form of $s$ with first and second entry equal to $-\beta$ and $\alpha\beta$ to ensure that \eqref{eq:sineq} is satisfied for the particular choice of~$\delta B$.

\textit{Step 2}. We must establish that the survival-of-the-fittest equilibria corresponding to $A$ and $B=B'+\delta B$ are $(\bar x,{\bf{0}}_n)$ and $({\bf{0}}_n, \bar y)$ with $\bar y=\bar x+\delta x+o(\delta)$. Since \eqref{eq:equi_A} holds, the claim for $(\bar x,{\bf{0}}_n)$ is immediate. Next, let $\delta X = \diag(\delta x)$, and observe that
\begin{align}
  &\Big[I-(I-\bar X- \delta X)(B'+\delta B)\Big](\bar x+\delta x)\nonumber \\
  &\qquad =\Big[I-(I-\bar X)B'\Big]\bar x + \delta XB'\bar x-(I-\bar X)(\delta B)\bar x \nonumber \\
  & \qquad \qquad +\Big[I-(I-\bar X)B'\Big]\delta x +o(\delta),
\end{align}
where $o(\delta) = \delta XB'\delta x + \delta X\delta B(\delta x+\bar x)-(I-\bar X)\delta B\delta x$ consists of terms that are of higher order than $\delta$. The first term on the right is zero due to \eqref{eq:equi_B'}. Notice that $B'\bar x=(I-\bar X)^{-1}\bar x$ according to \eqref{eq:equi_B'}, and this is substituted into the second term, where  we also use the fact that $\delta X(I-\bar X)^{-1}\bar x = (I-\bar X)^{-1}\bar X \delta x$. Finally, we use the constraint equation \eqref{eq:delta_B} to handle the third term. Hence, we obtain
\begin{align*}
   \Big[I&-(I-\bar X- \delta X)\Big](B'+\delta B)(\bar x+\delta x) \nonumber \\
  &  =\delta X(I-\bar X)^{-1}\bar x-(I-\bar X)\Big[(I-\bar X)^{-2}-B'\Big]\delta x \nonumber \\ 
  &\qquad \qquad +\Big[I-(I-\bar X)B'\Big]\delta x +o(\delta)\\
  &=\bar X(I-\bar X)^{-1}\delta x-(I-\bar X)^{-1}\delta x+\delta x+o(\delta)\\
  &=o(\delta).
\end{align*}
This establishes the claim for the equilibrium $({\bf{0}}_n,\bar y)$. 

\textit{Step 3.} We must establish that both survival-of-the-fittest eqilibria are locally exponentially stable, i.e. that $\rho[(I-\bar X)B]<1$ and $\rho[(I-\bar Y)A]<1$.  

Consider the effect of a small perturbation $\delta x$ in the entries of $\bar x$ on the Perron--Frobenius eigenvalue of the positive matrix $(I-\bar X)A$; we know the Perron--Frobenius eigenvalue is equal to 1. By Lemma~\ref{lem:perturb_eig} we have (to first order in $\delta$)
\begin{equation}
    \rho[(I-\bar X-\delta X)A]=\rho[(I-\bar X)A]-u^{\top}\delta X A \bar x
\end{equation}
Now use the fact that $[(I-\bar X)^{-1}-A]\bar x={\bf{0}}_n$ to write \mbox{$\rho[(I-\bar X-\delta X)A]=1-u^{\top}\delta X (I-\bar X)^{-1} \bar x$}.
From \eqref{eq:deltax_1}, we have that \mbox{$\rho[(I-\bar X-\delta X)A]<1$} as required. 

The other survival-of-the-fittest equilibrium can be handled similarly. Using arguments like those above, it is evident that $\rho[(I-\bar X)(B'+\delta B)]<1$ if and only if 
\begin{equation}\label{eq:4}
    v^{\top}(I-\bar X)(\delta B)\bar x < 0
\end{equation}
Using the expression for $(\delta B)\bar x$ from \eqref{eq:delta_B}, we have that an equivalent condition to \eqref{eq:4} is $v^{\top}[(I-\bar X)^{-1}-(I-\bar X)B']\delta x<0$.
Recall that $v^{\top}$ is the positive left eigenvector of $I-(I-\bar X)B'$ corresponding to the zero eigenvalue, and so the equivalent condition is
\begin{equation}
    v^{\top}[(I-\bar X)^{-1}-I]\delta x=v^{\top}(I-\bar X)^{-1}\bar X\delta x<0
\end{equation}
This is guaranteed by \eqref{eq:deltax_2}. \hfill \qed
\medskip

\textbf{Proof of Lemma~\ref{cor:main_paper}}: We postpone the proof that $\rho(B')>1$ to the end of the following argument. 

Observe first that because $\bar x > \vect 0_n$, it follows that $z$ must have both positive and negative entries to satisfy $z^\top \bar x = 0$. Next, note that 
\begin{equation}
[I-(I-\bar X) B'] \bar x = [I - (I-\bar X) (A + \epsilon e_i z^\top)] \bar x = \mathbf{0}_n.
\end{equation}
The irreducibility of $A$ implies that there exists a $k\in \mathcal{V}$ such that $a_{ik} > 0$. It follows that for sufficiently small $\epsilon > 0$, $B'=A + \epsilon e_i z^\top$ is nonnegative and irreducible since for any $j\in \mathcal{V}$, $z_j < 0$ only if $a_{ij} > 0$. Therefore  $B'$ fulfills  \eqref{eq:equi_B'}.  We let $u^\top$ and $v^\top$ be positive left eigenvectors of $-I+(I-\bar X)A$ and $-I+(I-\bar X)B^\prime$, respectively, associated with the simple zero eigenvalue (see Lemma~\ref{lem:useful}). Notice that this also implies that $u^\top$ and $v^\top$ are positive left eigenvectors of the nonnegative irreducible matrices $(I-\bar X)A$ and $(I-\bar X)B^\prime$, respectively, associated with the Perron--Frobenius eigenvalue, which is equal to unity. We now need to prove that $u$ and $v$ are linearly independent.

Now, assume, to obtain a contradiction, that $u$ and $v$ are in fact linearly dependent. Then $u^{\top}$ must be a left eigenvector of $(I-\bar X)B'$ with eigenvalue 1. Observe however that this would imply 
\begin{align*}
    u^\top = u^\top (I-\bar X)B' &= u^\top (I-\bar X)(A + \epsilon e_i z^\top) \\
    &= u^\top + u^\top (I-\bar X) \epsilon e_i z^\top \label{eq:3}
\end{align*}
Since $u^\top$ and $\vect 1_n$ are positive vectors, $(I-\bar X)$ is a positive diagonal matrix, and $z^\top \neq \vect 0_n$, it follows that $u^\top (I-\bar X) \epsilon e_i z^\top = u_i(1-\bar x_i)\epsilon z^\top \neq \vect 0_n.$
A contradiction is immediate. Lastly, the fact that $\bar X$ is positive definite with diagonal entries less than $1$ ensures that $\rho(B')>\rho((I-\bar X)B')=1$ (see Item~4 of the properties of $M$-matrices and Metzler matrices in Section~\ref{sec:results}).
\hfill \qed

\bibliographystyle{IEEEtran}
\bibliography{references,MYE_ANU}


\end{document}